\begin{document}

\newcommand{\TITLE}{Terms in elliptic divisibility sequences
  divisible by their indices}
\newcommand{\TITLERUNNING}{Index divisibility in elliptic
  divisibility sequences}
\newcommand{\DATE}{\today}
\newcommand{\VERSION}{7}

\theoremstyle{plain} 
\newtheorem{theorem}{Theorem} 
\newtheorem{conjecture}[theorem]{Conjecture}
\newtheorem{proposition}[theorem]{Proposition}
\newtheorem{lemma}[theorem]{Lemma}
\newtheorem{corollary}[theorem]{Corollary}

\theoremstyle{definition}
\newtheorem*{definition}{Definition}

\theoremstyle{remark}
\newtheorem{remark}[theorem]{Remark}
\newtheorem{example}[theorem]{Example}
\newtheorem{question}[theorem]{Question}
\newtheorem*{acknowledgement}{Acknowledgements}

\def\BigStrut{\vphantom{$(^{(^(}_{(}$}} 

\newenvironment{notation}[0]{%
  \begin{list}%
    {}%
    {\setlength{\itemindent}{0pt}
     \setlength{\labelwidth}{4\parindent}
     \setlength{\labelsep}{\parindent}
     \setlength{\leftmargin}{5\parindent}
     \setlength{\itemsep}{0pt}
     }%
   }%
  {\end{list}}

\newenvironment{parts}[0]{%
  \begin{list}{}%
    {\setlength{\itemindent}{0pt}
     \setlength{\labelwidth}{1.5\parindent}
     \setlength{\labelsep}{.5\parindent}
     \setlength{\leftmargin}{2\parindent}
     \setlength{\itemsep}{0pt}
     }%
   }%
  {\end{list}}
\newcommand{\Part}[1]{\item[\upshape#1]}

%
\newcommand{\EndProofAtDisplay}{\renewcommand{\qedsymbol}{}}
\newcommand{\qedtag}{\tag*{\qedsymbol}}

\renewcommand{\a}{\alpha}
\renewcommand{\b}{\beta}
\newcommand{\g}{\gamma}
\renewcommand{\d}{\delta}
\newcommand{\e}{\epsilon}
\newcommand{\f}{\phi}
\newcommand{\fhat}{{\hat\phi}}
\renewcommand{\l}{\lambda}
\renewcommand{\k}{\kappa}
\newcommand{\lhat}{\hat\lambda}
\newcommand{\m}{\mu}
\renewcommand{\o}{\omega}
\renewcommand{\r}{\rho}
\newcommand{\rbar}{{\bar\rho}}
\newcommand{\s}{\sigma}
\newcommand{\sbar}{{\bar\sigma}}
\renewcommand{\t}{\tau}
\newcommand{\z}{\zeta}

\newcommand{\D}{\Delta}
\newcommand{\F}{\Phi}
\newcommand{\G}{\Gamma}

\newcommand{\ga}{{\mathfrak{a}}}
\newcommand{\gb}{{\mathfrak{b}}}
\newcommand{\gc}{{\mathfrak{c}}}
\newcommand{\gd}{{\mathfrak{d}}}
\newcommand{\gm}{{\mathfrak{m}}}
\newcommand{\gn}{{\mathfrak{n}}}
\newcommand{\gp}{{\mathfrak{p}}}
\newcommand{\gq}{{\mathfrak{q}}}
\newcommand{\gP}{{\mathfrak{P}}}
\newcommand{\gQ}{{\mathfrak{Q}}}

\def\Acal{{\mathcal A}}
\def\Bcal{{\mathcal B}}
\def\Ccal{{\mathcal C}}
\def\Dcal{{\mathcal D}}
\def\Ecal{{\mathcal E}}
\def\Fcal{{\mathcal F}}
\def\Gcal{{\mathcal G}}
\def\Hcal{{\mathcal H}}
\def\Ical{{\mathcal I}}
\def\Jcal{{\mathcal J}}
\def\Kcal{{\mathcal K}}
\def\Lcal{{\mathcal L}}
\def\Mcal{{\mathcal M}}
\def\Ncal{{\mathcal N}}
\def\Ocal{{\mathcal O}}
\def\Pcal{{\mathcal P}}
\def\Qcal{{\mathcal Q}}
\def\Rcal{{\mathcal R}}
\def\Scal{{\mathcal S}}
\def\Tcal{{\mathcal T}}
\def\Ucal{{\mathcal U}}
\def\Vcal{{\mathcal V}}
\def\Wcal{{\mathcal W}}
\def\Xcal{{\mathcal X}}
\def\Ycal{{\mathcal Y}}
\def\Zcal{{\mathcal Z}}

\renewcommand{\AA}{\mathbb{A}}
\newcommand{\BB}{\mathbb{B}}
\newcommand{\CC}{\mathbb{C}}
\newcommand{\FF}{\mathbb{F}}
\newcommand{\GG}{\mathbb{G}}
\newcommand{\NN}{\mathbb{N}}
\newcommand{\PP}{\mathbb{P}}
\newcommand{\QQ}{\mathbb{Q}}
\newcommand{\RR}{\mathbb{R}}
\newcommand{\ZZ}{\mathbb{Z}}

\def \bfa{{\mathbf a}}
\def \bfb{{\mathbf b}}
\def \bfc{{\mathbf c}}
\def \bfe{{\mathbf e}}
\def \bff{{\mathbf f}}
\def \bfF{{\mathbf F}}
\def \bfg{{\mathbf g}}
\def \bfn{{\mathbf n}}
\def \bfp{{\mathbf p}}
\def \bfr{{\mathbf r}}
\def \bfs{{\mathbf s}}
\def \bft{{\mathbf t}}
\def \bfu{{\mathbf u}}
\def \bfv{{\mathbf v}}
\def \bfw{{\mathbf w}}
\def \bfx{{\mathbf x}}
\def \bfy{{\mathbf y}}
\def \bfz{{\mathbf z}}
\def \bfX{{\mathbf X}}
\def \bfU{{\mathbf U}}
\def \bfmu{{\boldsymbol\mu}}

\newcommand{\Gbar}{{\bar G}}
\newcommand{\Kbar}{{\bar K}}
\newcommand{\kbar}{{\bar k}}
\newcommand{\Obar}{{\bar O}}
\newcommand{\Pbar}{{\bar P}}
\newcommand{\Rbar}{{\bar R}}
\newcommand{\Qbar}{{\bar Q}}
\newcommand{\QQbar}{{\bar{\QQ}}}


\newcommand{\Anom}{\mathcal{A}_1}
\newcommand{\Aliq}{\mathcal{A}}
\newcommand{\AliqGen}{\mathcal{A}_{\textup{gen}}}
\newcommand{\Aseq}{\mathsf{A}}
\newcommand{\Arrow}{\operatorname{Arr}}
\newcommand{\Aut}{\operatorname{Aut}}
\newcommand{\defeq}{\stackrel{\textup{def}}{=}}
\newcommand{\Disc}{\operatorname{Disc}}
\renewcommand{\div}{\operatorname{div}}
\newcommand{\Div}{\operatorname{Div}}
\newcommand{\Etilde}{{\tilde E}}
\newcommand{\End}{\operatorname{End}}
\newcommand{\EDS}{\mathsf{D}}
\newcommand{\WardEDS}{\mathsf{W}}
\newcommand{\Fix}{\operatorname{Fix}}
\newcommand{\Frob}{\operatorname{Frob}}
\newcommand{\Gal}{\operatorname{Gal}}
\newcommand{\GCD}{{\operatorname{GCD}}}
\renewcommand{\gcd}{{\operatorname{gcd}}}
\newcommand{\GL}{\operatorname{GL}}
\newcommand{\hhat}{{\hat h}}
\newcommand{\Hom}{\operatorname{Hom}}
\newcommand{\Ideal}{\operatorname{Ideal}}
\newcommand{\Image}{\operatorname{Image}}
\newcommand{\In}{\operatorname{InDeg}}
\newcommand{\IR}[1]{{\textup{I}$_{#1}$}}
\newcommand{\longhookrightarrow}{\lhook\joinrel\relbar\joinrel\rightarrow}
\newcommand{\LS}[2]{\genfrac(){}{}{#1}{#2}}  
\newcommand{\Lucas}{\mathsf{L}}
\newcommand{\MOD}[1]{~(\textup{mod}~#1)}
\renewcommand{\pmod}{\MOD}
\newcommand{\Neron}{{\mathfrak E}} 
\newcommand{\Norm}{{\textup{\textsf{N}}}}
\newcommand{\NS}{\operatorname{NS}}
\newcommand{\ns}{{\textup{ns}}}
\newcommand{\notdivide}{\nmid}
\newcommand{\longonto}{\longrightarrow\hspace{-10pt}\rightarrow}
\newcommand{\onto}{\rightarrow\hspace{-10pt}\rightarrow}
\newcommand{\ord}{\operatorname{ord}}
\newcommand{\Out}{\operatorname{OutDeg}}
\newcommand{\Parity}{\operatorname{Parity}}
\newcommand{\Pic}{\operatorname{Pic}}
\newcommand{\Prob}{\operatorname{Prob}}
\newcommand{\Proj}{\operatorname{Proj}}
\newcommand{\rank}{\operatorname{rank}}
\newcommand{\res}{\operatornamewithlimits{res}}
\newcommand{\Resultant}{\operatorname{Resultant}}
\renewcommand{\setminus}{\smallsetminus}
\newcommand{\sign}{\operatorname{Sign}}
\newcommand{\Spec}{\operatorname{Spec}}
\newcommand{\Support}{\operatorname{Support}}
\newcommand{\tors}{{\textup{tors}}}
\newcommand{\Tr}{\operatorname{Tr}}
\newcommand{\Vertex}{\operatorname{Ver}}
\newcommand\W{W^{\vphantom{1}}}
\newcommand{\Wtilde}{{\widetilde W}}
\newcommand{\<}{\langle}
\renewcommand{\>}{\rangle}
\newcommand{\semiquad}{\hspace{.5em}}

\hyphenation{para-me-tri-za-tion}


\title[\TITLERUNNING]{\TITLE}
\date{\DATE, Draft \#\VERSION}
\author{Joseph H. Silverman}
\address{Mathematics Department, Box 1917, Brown University, 
Providence, RI 02912 USA} 
\email{jhs@math.brown.edu} 
\author{Katherine E. Stange} 
\address{%
Department of Mathematics, Simon Fraser University,
8888 University Drive, Burnaby, BC, Canada V5A 1S6,
and 
Pacific Institute for the Mathematical Sciences,
200 1933 West Mall, Vancouver, BC, Canada V6T 1Z2}
\email{stange@pims.math.ca}
\subjclass{Primary: 11G05; Secondary: 11B37, 11G20, 14G25}
\keywords{elliptic divisibility sequence, elliptic curve, aliquot cycle}

\thanks{The first author's research supported by 
NSF DMS-0650017 and DMS-0854755.
The second author's research supported by NSERC PDF-373333.
}


\begin{abstract}
Let $\EDS=(D_n)_{n\ge1}$ be an elliptic divisibility sequence. We
study the set~$\Scal(\EDS)$ of indices~$n$ satisfying $n\mid D_n$.  In
particular, given an index~$n\in\Scal(\EDS)$, we explain how to
construct elements~$nd\in\Scal(\EDS)$, where~$d$ is either a prime
divisor of~$D_n$, or~$d$ is the product of the primes in an aliquot
cycle for~$\EDS$. We also give bounds for the exceptional indices that
are not constructed in this way.
\end{abstract}

\maketitle


\section*{Introduction}
\label{section:introduction}

In this note we investigate the terms in elliptic divisibility
sequences that are divisible by their indices. The analogous problem
has been studied for a number of other types of sequences.  
For example, the Fibonacci sequence $(F_n)_{n\ge1}$ satisfies
\[
  n\mid F_n \quad\Longleftrightarrow\quad
  n\in\{1, 5, 12, 24, 25, 36, 48, 60, 72, 96,\ldots\}.
\]
See
\cite{MR1131414,MR0349567,MR0130205,Smyth,MR1271392,MR1393479,walsh}
for results on index divisibility in the Fibonacci sequence and in more
general Lucas sequences.  To cite another example, values of~$n$ that
divide~$a^n-a$ are called \emph{pseudoprimes to the base~$a$}. They
have been studied for their intrinsic interest and for applications to
cryptography~\cite{MR2173389,MR2463535,Pomerance,MR1737681,MR674829}.
\par 
In general, for any integer sequence~$\Aseq=(A_n)_{n\ge1}$ we
define the \emph{index divisibility set} of~$\Aseq$ to be
\[
  \Scal(\Aseq) = \bigl\{ n\ge 1 : n\mid A_n\bigr\}.
\]
Our goal is to build~$\Scal(\Aseq)$ multiplicatively via a directed
graph that connects each element~$n\in\Scal(\Aseq)$ to its (minimal)
multiples in~$\Scal(\Aseq)$. Thus we define a directed graph by taking
the set~$\Scal(\Aseq)$ to be the set of vertices and by drawing an
arrow from~$n$ to~$m$ if the following two conditions are true:
\begin{parts}
\Part{(1)}
$n\mid m$.
\Part{(2)}
If $k\in\Scal(\Aseq)$ satisfies $n\mid k\mid m$, then  $k=n$ or $k=m$.
\end{parts}
In other words, if we partially order~$\Scal(\Aseq)$ by divisibility,
then we draw an arrow from~$n$ to~$m$ if $n$ is strictly smaller
than~$m$ and if there are no elements of~$\Scal(\Aseq)$ that are
strictly between~$n$ and~$m$.
\par
We denote the set of arrows by~$\Arrow(\Aseq)$, and we assign
weight~$m/n$ to the arrow $(n\to m)$.  (Smyth~\cite[Section~8]{Smyth}
defines a similar structure, but he allows only arrows of prime
weight, so his graphs may be disconnected.)

\begin{definition}
Let~$E/\QQ$ be an elliptic curve given by a Weierstrass equation and
let~$P\in E(\QQ)$ be a nontorsion point.  The \emph{elliptic
  divisibility sequence} (EDS) associated to the pair~$(E,P)$ is the
sequence of positive integers~$\EDS=(D_n)_{n\ge1}$ obtained by writing
\[
  x\bigl([n]P\bigr) = \frac{A_n}{D_n^2} \in \QQ
\]
as a fraction in lowest terms.  The EDS is \emph{minimal} if~$E$ is
given by a minimal Weierstrass equation.
An EDS is \emph{normalized} if $D_1=1$.  An arbitrary EDS~$(D_n)_{n\ge1}$ can
be normalized by a change of variables in the defining Weierstrass
equation, in which case the new EDS is~$(D_n/D_1)_{n\ge1}$. Note, however,
that the normalized sequence may not be minimal.
\end{definition}

We remark that there is an alternative definition of EDS via a
non-linear recurrence that gives almost the same set of sequences; see
Remark~\ref{remark:wardedsdef} for further details. We also note
that, as its name suggests, an~EDS is a divisibility sequence, i.e.,
\[
  m\mid n \implies D_m\mid D_n.
\]

The arithmetic properties of EDS have been extensively studied as
examples of nontrivial nonlinear recursions that possess enough
additional structure to make them amenable to Diophantine analysis.
See for example Ward's original papers~\cite{MR0027286,MR0023275},
subsequent work
including~\cite{MR1815962,MR2164113,MR2220263,MR2226354}, and
applications of~EDS to Hilbert's 10th problem and to
cryptography~\cite{MR2377127,MR2480276,MR1992832,MR2423649}.

Although EDS are defined via a non-linear process, their underlying
structure comes from the associated elliptic curve. They are thus a
natural generalization of linear recursions such as the Fibonacci and
Lucas sequences, which are associated to the multiplicative group.

\begin{example}
\label{example:N37curve}
Let~$\EDS$ be the EDS
\begin{multline*}
  \EDS = 
  (1, 1, 1, 1, 2, 1, 3, 5, 7, 4, 23, 29, 59, 129,\\
      314, 65, 1529, 3689, 8209, 16264, 83313,\dots)
\end{multline*}
associated to the elliptic curve and point
\[
  E:y^2+y=x^3-x,\qquad P=(0,0).
\]
Then
\[
  \Scal(\EDS)
   = \{ 1, 40, 53, 63, 80, 127, 160, 189, 200, 320, 400, 441, 443,\dots\}.
\]
We remark that the sequence~$\EDS$ grows very rapidly. Thus the first
two nontrivial elements of~$\Scal(\EDS)$ in this example come from
\begin{align*}
  D_{40} &= 40\cdot 13526278251270010,\\
  D_{53}&=53\cdot 299741133691576877400370757471.
\end{align*}
The reader may have noticed that~$\Scal(\EDS)$ contains the
primes~$53$,~$127$, and~$443$, which are the first three anomalous
primes for~$E$, i.e., primes satisfying $\#E(\FF_p)=p$. This is
not a coincidence.
\end{example}

Smyth has given an explicit description of index divisibility for
Lucas sequences. For comparison with our results, we state one of his
theorems, reformulated using the terminology of directed graphs.

\begin{theorem}
\label{theorem:smyth}
\textup{(Smyth \cite[Theorem 1]{Smyth})}
Let~$a,b\in\ZZ$, and let $\Lucas = (L_n)_{n\ge1}$ be the associated
Lucas sequence of the first kind, i.e., defined by the recursion
\[
  L_{n+2} = aL_{n+1} - bL_n,\qquad L_0=0,\quad L_1=1.
\]
Let $\D=a^2-4b$. Then the arrows originating at a vertex
$n\in\Scal(\Lucas)$ are
\[
  \{ n\to np : \text{$p$ is prime and $p\mid L_n\D$} \}
    \cup \Bcal_{a,b},
\]
where
\[
  \Bcal_{a,b} = \begin{cases}
    \{1\to6\}
         &\text{if $a\equiv3\pmod6$ and $b\equiv\pm1\pmod6$,} \\
    \{1\to12\}
         &\text{if $a\equiv\pm1\pmod6$ and $b\equiv-1\pmod6$,} \\
    \emptyset&\text{otherwise.}\\
  \end{cases}
\]
\end{theorem}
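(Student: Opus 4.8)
The plan is to reduce the whole description to two facts: a clean criterion for when $n\in\Scal(\Lucas)$ forces $np\in\Scal(\Lucas)$ along a prime $p$, and a finite classification of the ``small'' elements of $\Scal(\Lucas)$ that this criterion fails to reach from the vertex $1$ — which is precisely what $\Bcal_{a,b}$ records. The first step is to set up the standard local theory of $\Lucas$ at a prime $p$: the rank of apparition $\rho(p)=\min\{n\ge1:p\mid L_n\}$; the equivalence $p\mid L_n\iff\rho(p)\mid n$ (valid once $p\nmid\gcd(a,b)$); the congruence $\rho(p)\mid p-\LS{\D}{p}$ when $p\nmid b\D$, so that $\gcd(\rho(p),p)=1$ there; the fact that $\rho(p)=p$ when $p\mid\D$ and $p\nmid b$ (a double root of $x^2-ax+b$ modulo $p$, whence $L_n\equiv n\lambda^{n-1}\pmod{p}$); and the lifting-the-exponent identity $v_p\bigl(L_{m\rho(p)}\bigr)=v_p\bigl(L_{\rho(p)}\bigr)+v_p(m)$ for odd $p\nmid b$. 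For $p=2$ I would instead use the identities $L_{2n}=L_nV_n$ and $V_n^2-\D L_n^2=4b^n$, where $(V_n)$ is the companion (second-kind) Lucas sequence: the second forces $2\mid V_n$ whenever $2\mid L_n$, hence $v_2(L_{2n})>v_2(L_n)$ in that case. The finitely many degenerate primes $p\mid\gcd(a,b)$, for which $p\mid L_n$ for every $n\ge2$ and $p\mid\D$, are handled by a direct check.

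The heart of the argument is the criterion: for $n\in\Scal(\Lucas)$ and $p$ prime,
\[
  np\in\Scal(\Lucas)\iff p\mid L_n\D .
\]
Because $\Lucas$ is a divisibility sequence, $v_q(np)=v_q(n)\le v_q(L_n)\le v_q(L_{np})$ for every prime $q\mid n$ with $q\ne p$, so $np\mid L_{np}$ collapses to the single inequality $v_p(L_{np})\ge v_p(n)+1$. If $p\mid L_n$, then $\rho(p)\mid n$ and the exponent identity (or, for $p=2$, the companion-sequence bound) gives $v_p(L_{np})\ge v_p(L_n)+1\ge v_p(n)+1$, so $np\in\Scal(\Lucas)$, consistent with $p\mid L_n\D$. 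If $p\nmid L_n$, then $p\nmid n$ and we need $p\mid L_{np}$, i.e.\ $\rho(p)\mid np$: when $p\mid\D$ this holds since $\rho(p)=p$ (or, degenerately, since $p$ divides every $L_k$ with $k\ge2$); when $p\nmid\D$, $\gcd(\rho(p),p)=1$ forces $\rho(p)\mid n$, hence $p\mid L_n$, a contradiction, so $np\notin\Scal(\Lucas)$. Again this matches $p\mid L_n\D$.

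Since $np/n$ is prime, the criterion immediately yields $\{n\to np:p\mid L_n\D\}\subseteq\Arrow(\Lucas)$, as nothing in $\Scal(\Lucas)$ can sit strictly between $n$ and $np$. The remaining arrows must have composite weight, and here the crucial observation is $L_1=1$: from the vertex $1$ the only prime steps go to primes $p\mid\D$, whereas for $n>1$ one argues, using the criterion and the smallest prime factor of $n$ (which divides $L_n$), that every minimal proper multiple of $n$ in $\Scal(\Lucas)$ is already of the form $np$. So every composite-weight arrow starts at $1$, and one must classify the minimal elements $m$ of $\Scal(\Lucas)\setminus\{1\}$ no prime factor of which divides $\D$. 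If $m=p$ or $m=p^2$, then $\rho(p)$ must divide $p$ or $p^2$ while being coprime to $p$ — impossible; if $m=pq$ with $p<q$, then $\rho(p)=q$ forces $q\mid p\pm1$, hence $(p,q)=(2,3)$ and $m=6$; and any larger composite $m$ contains a strictly smaller element of $\Scal(\Lucas)$, so is not minimal, except for $m=12$ when $6\notin\Scal(\Lucas)$. Deciding exactly when $6\in\Scal(\Lucas)$, or when instead $12\in\Scal(\Lucas)$ but $6\notin\Scal(\Lucas)$, together with $2,3,4\notin\Scal(\Lucas)$, comes down to computing $L_2,L_3,L_4,L_6,L_{12}$ modulo $2$ and $3$; reading off the resulting constraints on $a$ and $b$ produces the mod-$6$ trichotomy defining $\Bcal_{a,b}$.

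I expect this last step to be the main obstacle: eliminating all unexpected minimal elements of $\Scal(\Lucas)$ and extracting the exact mod-$6$ conditions is a finite but fiddly computation, complicated by the need to treat the prime $2$ (where the exponent lemma has a correction term) and the degenerate primes dividing $\gcd(a,b)$ uniformly with the generic primes. Organizing the bookkeeping around the small ranks of apparition $\rho(2)$ and $\rho(3)$, and around the residues of $a$ and $b$ modulo $6$, is what makes everything assemble into the clean statement above.
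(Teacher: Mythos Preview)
The paper does not give its own proof of this theorem: it is stated with attribution to Smyth and used only as background and motivation for the elliptic analogue. So there is no in-paper argument to compare your proposal against.

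That said, let me comment on the proposal itself. The local theory you set up and the prime-weight criterion
\[
  n\in\Scal(\Lucas),\ p\text{ prime}\ \Longrightarrow\ \bigl(np\in\Scal(\Lucas)\iff p\mid L_n\D\bigr)
\]
are correct and are exactly the right engine. The genuine gap is the step where you assert that for $n>1$ ``every minimal proper multiple of $n$ in $\Scal(\Lucas)$ is already of the form $np$,'' justified only by a reference to ``the smallest prime factor of $n$ (which divides $L_n$).'' Knowing that the smallest prime $q$ of $n$ divides $L_n$ tells you $(n\to nq)\in\Arrow(\Lucas)$, but it does not by itself rule out a separate composite-weight arrow $(n\to nd)$. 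What actually drives this reduction is a different observation: if $p$ is the \emph{smallest prime factor of $d$} and $p\nmid\D$ is odd, then $\rho(p)\mid p\pm1$ has all prime factors strictly less than $p$, hence coprime to $d$; since $\rho(p)\mid nd$, this forces $\rho(p)\mid n$, so $p\mid L_n$ and $np\in\Scal(\Lucas)$, killing the arrow. The cases $p\mid\D$ and $p=2$ then have to be handled separately, and it is precisely the case $p=2$ with $\rho(2)=3$ (and the induced constraint $3\mid d$, then the analysis of $\rho(3)$) that produces the exceptional arrows in $\Bcal_{a,b}$ and explains why they can only originate at $n=1$. Your sketch circles near this in the $n=1$ analysis but never states or uses it for $n>1$.

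A second, smaller gap: the sentence ``any larger composite $m$ contains a strictly smaller element of $\Scal(\Lucas)$, so is not minimal, except for $m=12$'' needs an argument. You should explain why, once the smallest prime of $m$ is forced to be $2$ and $\rho(2)=3$ forces $3\mid m$, the only surviving minimal candidates are $6$ and $12$ (and not, say, $18$ or $2^a3^b$ for larger exponents). This is again a short computation with $\rho(2)$ and $\rho(3)$, but it has to be written down.
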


Smyth's theorem says in particular that with at most one exception,
every arrow for a Lucas sequence has prime weight.  This is not true
for~EDS and is due to the fact that the number of points~$\#E(\FF_q)$
on an elliptic curve over a finite field varies irregularly
compared to the number of points in the multiplicative group~$\FF_q^*$
of a finite field.  This leads to EDS arrows of the form $n\to nd$,
where~$d$ is a so-called aliquot number for the EDS, as in the
following definition.  The aliquot phenomenon has no analogue in the
case of Lucas sequences.

\begin{definition}
A list~$(p_1,\ldots,p_\ell)$ of distinct primes of good
reduction for~$E$ is an \emph{aliquot cycle} for~$\EDS$ if
\[
  p_{i+1} =\min\{ r\ge1 : p_i \mid D_r\}
  \qquad\text{for all $1\le i\le \ell$,}
\]
where we set $p_{\ell+1}=p_1$ to complete the cycle.  The associated
\emph{aliquot number} is the product~$p_1\cdots p_\ell$.  
\end{definition}

The index divisibility graph of an EDS is considerably more
complicated than that of a Lucas sequence.  We state here a simplified
version of Theorem~\ref{theorem:buildarrows}, which is the main result
of this paper.  We remark that an analogue of our main result for EDS
associated to singular elliptic curves would give a version of Smyth's
theorem; see Remark~\ref{remark:singularEDS} for details.

\begin{theorem}
\label{theorem:intromainresult}
Let~$\EDS$ be a minimal regular EDS associated to the elliptic curve
$E/\QQ$ and point $P\in E(\QQ)$.  \textup(See
Section~$\ref{section:aliqseqs}$ for the definition of regularity. In
particular, every EDS has a regular subsequence.\textup)
\begin{parts}
\Part{(a)}
If $n\in\Scal(\EDS)$ and $p$ is prime and~$p\mid D_n$,
then $(n\to np)\in\Arrow(\EDS)$.
\Part{(b)}
If $n\in\Scal(\EDS)$ and $d$ is an aliquot number for~$\EDS$
and $\gcd(n,d)=1$, then $(n\to nd)\in\Arrow(\EDS)$.
\Part{(c)}
If $p\ge7$ is a prime of good reduction for~$E$ and if
$(n\to np)\in\Arrow(\EDS)$, then either $p\mid D_n$
or~$p$ is an aliquot number for~$\EDS$.
\Part{(d)}
If $gcd(n,d)=1$ and if $(n\to nd)\in\Arrow(\EDS)$ and
if~$d=p_1p_2\cdots p_\ell$ is a product of $\ell\ge2$ distinct primes
of good reduction for~$E$ satisfying $\min p_i>(2^{-1/2\ell}-1)^{-2}$,
then~$d$ is an aliquot number for~$\EDS$.
\end{parts}
\end{theorem}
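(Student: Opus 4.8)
My plan is to take the four parts in turn, relying on the basic structure theory of an elliptic divisibility sequence: for a prime $p$ of good reduction the \emph{rank of apparition} $\rho(p)=\min\{r\ge1:p\mid D_r\}$ exists, equals the order of the reduced point $\tilde P\in E(\FF_p)$, and satisfies $p\mid D_m\iff\rho(p)\mid m$; in particular $\rho(p)\mid\#E(\FF_p)$, so $\rho(p)\le(\sqrt p+1)^2$ by the Hasse bound. For~(a) and~(b) I also use the valuation property of regular EDS, namely that $p\mid D_n$ implies $v_p(D_{np})\ge v_p(D_n)+1$. Part~(a) is then immediate: the only integers $k$ with $n\mid k\mid np$ are $k=n$ and $k=np$, so we need only $np\in\Scal(\EDS)$, which holds because $q^{v_q(n)}\mid D_n\mid D_{np}$ for every prime $q\mid n$ with $q\ne p$, while $v_p(D_{np})\ge v_p(D_n)+1\ge v_p(n)+1$ supplies the missing factor of $p$. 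For~(b), write $d=p_1\cdots p_\ell$ with $(p_1,\dots,p_\ell)$ an aliquot cycle, so $\rho(p_i)=p_{i+1}$ with indices read in $\ZZ/\ell\ZZ$. Since $\gcd(n,d)=1$ and $d$ is squarefree, every $k$ with $n\mid k\mid nd$ has the shape $ne_T$ with $e_T=\prod_{i\in T}p_i$, $T\subseteq\ZZ/\ell\ZZ$, and using $n\mid D_n$ one checks
\[
  ne_T\in\Scal(\EDS)\iff\bigl(i\in T\Rightarrow\rho(p_i)\mid ne_T\bigr)\iff\bigl(i\in T\Rightarrow i+1\in T\bigr).
\]
The only subsets of $\ZZ/\ell\ZZ$ stable under $i\mapsto i+1$ are $\emptyset$ and the whole group, so nothing in $\Scal(\EDS)$ lies strictly between $n$ and $nd$; and $nd\in\Scal(\EDS)$ because $p_i\mid D_{p_{i+1}}\mid D_{nd}$ for each $i$. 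Hence $(n\to nd)\in\Arrow(\EDS)$.

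For~(c), the arrow $(n\to np)$ forces $np\in\Scal(\EDS)$, hence $\rho(p)\mid np$. If $p\nmid D_n$ then $\rho(p)\nmid n$, which together with $\rho(p)\mid np$ forces $p\mid\rho(p)$. For $p\ge7$ the only multiple of $p$ lying in the Hasse interval is $p$ itself, so $\#E(\FF_p)=p$; since $\rho(p)\mid\#E(\FF_p)=p$ and $\rho(p)\ne1$ (because $D_1\mid D_n$ and $p\nmid D_n$), we get $\rho(p)=p$, so $(p)$ is an aliquot cycle and $p$ is an aliquot number. For~(d) I would first reformulate the hypothesis. From $nd\in\Scal(\EDS)$ we have $\rho(p_i)\mid nd$, so we may write $\rho(p_i)=a_ib_i$ with $a_i=\gcd(\rho(p_i),n)$ and $b_i=\gcd(\rho(p_i),d)$; let $S_i$ be the set of primes dividing $b_i$. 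Exactly as in~(b) (and using $a_i\mid n$), for $T\subseteq\{p_1,\dots,p_\ell\}$ one has $ne_T\in\Scal(\EDS)\iff S_i\subseteq T$ for every $p_i\in T$, so the arrow condition says precisely that the only subsets of $\{p_1,\dots,p_\ell\}$ closed under the assignment $p_i\mapsto S_i$ are $\emptyset$ and the whole set. A short combinatorial argument (via the condensation into strongly connected components) shows this is equivalent to strong connectivity of the directed graph $G$ on $\{p_1,\dots,p_\ell\}$ with an edge $p_i\to p_j$ whenever $p_j\in S_i$. It now suffices to prove that each $\rho(p_i)$ equals one of $p_1,\dots,p_\ell$ and that $G$ is a single $\ell$-cycle; then $a_i=1$ for all $i$ and, after relabelling, $\rho(p_i)=p_{i+1}$, so $(p_1,\dots,p_\ell)$ is an aliquot cycle with aliquot number $d$.

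To prove this, suppose $\rho(p_{i_0})$ is not equal to any of $p_1,\dots,p_\ell$. By strong connectivity $S_{i_0}\ne\emptyset$, so we may pick $p_j\in S_{i_0}$; then $p_j\mid\rho(p_{i_0})$ and $\rho(p_{i_0})\ne p_j$, hence $\rho(p_{i_0})\ge2p_j$, so the edge $p_{i_0}\to p_j$ satisfies $p_j\le\tfrac12\rho(p_{i_0})<\tfrac12(\sqrt{p_{i_0}}+1)^2$. Choose a simple directed cycle $C$ of $G$ through this edge; its length is at most $\ell$. Put $p_{\min}=\min_ip_i$. Every ordinary edge $p_a\to p_b$ of $C$ satisfies $p_b\le\rho(p_a)<(\sqrt{p_a}+1)^2$, hence $\sqrt{p_b/p_a}<1+1/\sqrt{p_{\min}}$, while the special edge satisfies $\sqrt{p_j/p_{i_0}}<\tfrac1{\sqrt2}(1+1/\sqrt{p_{\min}})$; multiplying these estimates around $C$ and using the telescoping identity $\prod_{(p_a\to p_b)\in C}\sqrt{p_b/p_a}=1$ yields
\[
  1<\frac1{\sqrt2}\Bigl(1+\frac1{\sqrt{p_{\min}}}\Bigr)^{\ell},\qquad\text{equivalently}\qquad 2<\Bigl(1+\frac1{\sqrt{p_{\min}}}\Bigr)^{2\ell},
\]
which contradicts the hypothesis $\min_ip_i>(2^{-1/2\ell}-1)^{-2}$. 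Hence each $\rho(p_i)$ is some $p_{\sigma(i)}$, so $G$ is the functional graph of the map $\sigma$, and a strongly connected functional graph on $\ell\ge2$ vertices is a single $\ell$-cycle; this completes~(d).

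The step I expect to be the main obstacle is this last argument for~(d): translating the purely order-theoretic description of the arrows of $\Scal(\EDS)$ into strong connectivity of $G$, and then showing that the Hasse bound, once propagated multiplicatively around a cycle of length at most $\ell$, is incompatible with any vertex of out-degree $\ge2$ (or with any $\rho(p_i)$ having a prime factor not dividing $d$) as soon as the primes $p_i$ exceed the stated threshold. Everything else reduces to bookkeeping, but getting the telescoping estimate to match the precise constant $(2^{-1/2\ell}-1)^{-2}$ takes some care.
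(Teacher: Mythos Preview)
Your proof is correct. Parts~(a)--(c) follow the same lines as the paper: (a) via the valuation jump $\ord_p(D_{np})\ge\ord_p(D_n)+1$ coming from the formal group (Lemma~\ref{lemma:formalgp}), and (c) via $p\mid\rho(p)\mid\#E(\FF_p)$ together with the Hasse bound at $p\ge7$. Your treatment of~(b) via ``subsets of $\ZZ/\ell\ZZ$ closed under $i\mapsto i+1$'' is a tidy repackaging of the paper's argument, and in both~(b) and~(d) you are implicitly invoking the characterisation of $\Scal(\EDS)$ in Proposition~\ref{proposition:amiq} (which is where regularity enters).

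Part~(d) is where you genuinely diverge. The paper proves the stronger Theorem~\ref{theorem:buildarrows}(d) by establishing the global identity
\[
  \prod_{p\mid d}\frac{r_p}{p}
  =\Bigl(\prod_{q\mid d}q^{\In(q)-1}\Bigr)\Bigl(\prod_{p\mid d}M_p\Bigr)
\]
on the auxiliary digraph (your~$G$), and then lower-bounding the right side by $p_0^t$, where $t$ counts the primes $p\mid d$ with $r_p$ composite; Theorem~\ref{theorem:intromainresult}(d) then follows by taking $t\ge1$ and $p_0\ge2$. You instead isolate a single ``bad'' edge $p_{i_0}\to p_j$ (one with $\rho(p_{i_0})\ne p_j$, hence $p_j\le\tfrac12\rho(p_{i_0})$), run it around a simple directed cycle of length at most $\ell$ guaranteed by strong connectivity, and telescope the Hasse bounds edge-by-edge to reach $2<(1+1/\sqrt{p_{\min}})^{2\ell}$. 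This is more direct for the stated constant and avoids the in-degree bookkeeping, but it does not by itself recover the refined exponent~$t$ of the paper's version. Your explicit identification of the arrow condition with strong connectivity of~$G$ is also a helpful clarification; the paper's Lemma~\ref{lemma:inoutarrows} proves the same fact but phrases it as ``connectedness'' together with an in-arrow and out-arrow at every vertex.
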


We briefly describe the contents of this note. In
Section~\ref{section:edsprelims} we give some basic properties of
elliptic divisibility sequences. In particular,
Lemma~\ref{lemma:formalgp} states fairly delicate divisibility
estimates whose origins lie in the formal group of~$E$.  The brief
Section~\ref{section:aliqseqs} gives the definition of aliquot cycles
and aliquot numbers for~EDS.  Section~\ref{section:arrowsingraph}
contains the statement and proof of Theorem~\ref{theorem:buildarrows},
which is the main result of this paper.
Theorem~\ref{theorem:buildarrows}, which is an expanded version of
Theorem~\ref{theorem:intromainresult}, explains how to construct the
arrows that are used to build~$\Scal(\EDS)$.  This is followed in
Section~\ref{section:arrowremarks} with a number of remarks and
examples related to our main theorem.
Section~\ref{section:aliquotexamples} defines aliquot cycles on an
elliptic curve (see~\cite{SilvStangEAS}) and explains how they are
related to aliquot cycles for an EDS on that curve.  Finally, in
Section~\ref{section:miscremarks}, we make some miscellaneous remarks
on general index divisibility sets and on an alternative definition
of~EDS.

\section{Preliminaries on elliptic divisibility sequences}
\label{section:edsprelims}

Let~$\EDS$ be a minimal EDS associated to an elliptic curve~$E/\QQ$
and point~$P\in E(\QQ)$.  We let~$\Disc(E)$ denote the minimal
discriminant of~$E$. For all primes~$p$ we have
\[
  p\mid D_n \quad\Longleftrightarrow\quad [n]P\equiv O \pmod{p}.
\]

\begin{definition}
We write~$r_n=r_n(\EDS)$ for  the \emph{rank of apparition} of~$n$
in~$\EDS$, which is defined by
\[
  r_n = \min\{ r \ge 1 : n \mid D_r \}.
\]
Let $\Neron/\Spec\ZZ$ denote the N\'eron model of~$E$.  Then an
equivalent definition of~$r_n$ is that it is the smallest value
of~\text{$r\ge1$} such that
\[
  [r]P \equiv O \pmod{n},
\]
where the congruence takes place in $\Neron(\ZZ/n\ZZ)$.
\end{definition}

The following three lemmas contain virtually all of the information
about EDS that we will use in our analysis of EDS index divisibility.

\begin{lemma}
\label{lemma:nDmrnmmPOn}
Let~$\EDS$ be a minimal EDS associated to an elliptic curve~$E/\QQ$
and point~$P\in E(\QQ)$.  Then
\[
  n\mid D_m \quad\Longleftrightarrow\quad r_n\mid m
   \quad\Longleftrightarrow\quad [m]P\equiv O\pmod{n}.
\]
\end{lemma}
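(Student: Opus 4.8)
The plan is to derive the full three-term equivalence from the single statement
\[
(*)\qquad n\mid D_m\iff [m]P\equiv O\pmod{n},
\]
the congruence being taken in $\Neron(\ZZ/n\ZZ)$, and then to bridge to the condition $r_n\mid m$ by a short group-theoretic argument. Reduction modulo~$n$ gives a homomorphism $E(\QQ)\to\Neron(\ZZ/n\ZZ)$ into a finite abelian group, so the image of~$P$ has some finite order~$\rho$, and $[m]P\equiv O\pmod n$ holds precisely when $\rho\mid m$. Applying~$(*)$ with $m=r$ shows that the defining quantity $r_n=\min\{r\ge1:n\mid D_r\}$ equals $\min\{r\ge1:[r]P\equiv O\pmod n\}=\rho$; hence $r_n=\rho$, and combining this with~$(*)$ gives $n\mid D_m\iff[m]P\equiv O\pmod n\iff\rho\mid m\iff r_n\mid m$. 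This also justifies, after the fact, the ``equivalent definition'' of~$r_n$ recorded in the excerpt.

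It remains to prove~$(*)$. First I would localize. Writing $n=\prod_p p^{e_p}$, the Chinese Remainder Theorem gives $\Neron(\ZZ/n\ZZ)\cong\prod_p\Neron(\ZZ/p^{e_p}\ZZ)$, so $[m]P\equiv O\pmod n$ is equivalent to $[m]P\equiv O\pmod{p^{e_p}}$ for every $p\mid n$; likewise $n\mid D_m$ is equivalent to $p^{e_p}\mid D_m$ for every $p\mid n$. Thus it suffices to prove, for each prime power~$p^e$ with $e\ge1$, that $p^e\mid D_m\iff[m]P\equiv O\pmod{p^e}$.

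Fix $p$ and $m$. If $p\nmid D_m$, then by the prime case recalled at the start of this section we have $[m]P\not\equiv O\pmod p$, hence $[m]P\not\equiv O\pmod{p^e}$, and also $p^e\nmid D_m$, so both sides fail. If $p\mid D_m$, set $Q=[m]P$; then $Q$ reduces modulo~$p$ to the smooth point~$O$, so it lies in the kernel of reduction $\Neron(\ZZ_p)\to\Neron(\FF_p)$, which for a minimal Weierstrass equation is identified, via the formal parameter $z=-x/y$, with the formal group $\hat E(p\ZZ_p)$; under this identification a point~$R$ of $\hat E(p\ZZ_p)$ reduces to~$O$ modulo~$p^e$ exactly when $v_p(z(R))\ge e$. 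Hence $[m]P\equiv O\pmod{p^e}\iff v_p(z(Q))\ge e$. On the other hand, the formal-group expansions $x=z^{-2}+\cdots$ and $y=-z^{-3}+\cdots$ give $v_p(x(Q))=-2\,v_p(z(Q))$ whenever $v_p(z(Q))\ge1$, while $x(Q)=A_m/D_m^2$ in lowest terms with $p\nmid A_m$ (since $p\mid D_m$), so that $v_p(x(Q))=-2\,v_p(D_m)$. Therefore $v_p(z(Q))=v_p(D_m)$, and $[m]P\equiv O\pmod{p^e}\iff v_p(D_m)\ge e\iff p^e\mid D_m$, which proves~$(*)$.

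I expect the Chinese Remainder reduction and the order-of-a-point argument to be routine; the real work is the prime-power step, where one must match the $p$-adic valuation of~$D_m$ with the filtration of the N\'eron model induced by the formal group of~$E$. The minimality hypothesis on~$\EDS$ is exactly what makes these two descriptions agree, and is where one must be careful with the primes $p=2,3$ and with the behaviour of the formal parameter there.
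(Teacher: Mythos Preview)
Your argument is correct. The paper, however, dispatches this lemma in a single line (``Immediate from the definitions''): in the setup preceding the lemma it has already recorded both the prime-level equivalence $p\mid D_n\Leftrightarrow [n]P\equiv O\pmod p$ and the ``equivalent definition'' of $r_n$ as the order of~$P$ in $\Neron(\ZZ/n\ZZ)$, so from the paper's point of view there is nothing left to prove. What you have done is unpack and justify precisely those assertions---the CRT reduction to prime powers and the formal-group identification $v_p(z(Q))=v_p(D_m)$---that the paper treats as definitional background. In fact your key computation, that $\ord_p(-x/y)=\ord_p(D_m)$ once $p\mid D_m$, is exactly the identity the paper establishes as equation~\eqref{eqn:ordpphinPAnBnBn} in the proof of the \emph{next} lemma (Lemma~\ref{lemma:formalgp}). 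So your proof is more self-contained, at the cost of front-loading a formal-group calculation that the paper performs anyway a few lines later; the paper's version is terser but relies on the reader accepting the ``equivalent definition'' of~$r_n$ without further comment.
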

\begin{proof}
Immediate from the definitions.
\end{proof}

The next lemma describes the growth of~$p$-divisibility for
EDS. A direct corollary is that an EDS is a divisibility sequence.

\begin{lemma}
\label{lemma:formalgp}
Let~$\EDS=(D_n)_{n\ge1}$ be a minimal~EDS, let~$n\ge1$, and let~$p$ be a
prime satisfying~$p\mid D_n$. 
\begin{parts}
\Part{(a)}
For all $m\ge1$ we have
\[
  \ord_p(D_{mn}) \ge \ord_p(mD_n). 
\]
\Part{(b)}
The inequality in~\textup{(a)} is strict,
\[
  \ord_p(D_{mn}) > \ord_p(mD_n),
\]
if and only if
\[
  \def\quad{\hspace{.3em}}
  p=2
  \quad\text{and}\quad 2\mid m
  \quad\text{and}\quad \ord_2(D_n)=1
  \quad\text{and}\quad 
  \left(\begin{tabular}{@{}c@{}}
  \text{$E$ has ordinary or multi-} \\
  \text{plicative reduction at $2$} \\
  \end{tabular}\right).
\]
\textup(For the definition of ordinary reduction,
see~\cite[\S{V.3}]{AEC}.  In particular,~$E$ has ordinary reduction
at~$2$ if and only if $2\mid\#E(\FF_2)$.\textup)
\end{parts}
\end{lemma}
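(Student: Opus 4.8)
The plan is to work entirely in the formal group $\hat E/\ZZ_p$ attached to the minimal Weierstrass equation of $E$. First I would record the standard dictionary between divisibility by $p$ in $\EDS$ and $p$-adic valuations in the formal group: since $p\mid D_n$ we have $\ord_p\bigl(x([n]P)\bigr)<0$, and balancing the $p$-adic valuations of the two sides of the (integral) Weierstrass equation forces $\ord_p\bigl(x([n]P)\bigr)=-2s$ and $\ord_p\bigl(y([n]P)\bigr)=-3s$ for some integer $s\ge1$. Hence the formal parameter $t=-x/y$ satisfies $\ord_p\bigl(t([n]P)\bigr)=s=\ord_p(D_n)$, so $Q:=[n]P$ lies in the formal group $\hat E(p\ZZ_p)$. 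Since $\hat E(p\ZZ_p)$ is a subgroup and $P$ is nontorsion, $[m]Q=[mn]P$ is a nonzero point of $\hat E(p\ZZ_p)$, and the same valuation count gives $\ord_p(D_{mn})=\ord_p\bigl(t([m]Q)\bigr)$. Both parts of the lemma therefore become comparisons of $\ord_p\bigl(t([m]Q)\bigr)$ with $\ord_p(t(Q))+\ord_p(m)$.

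Next I would write $m=p^a m_0$ with $p\nmid m_0$. The endomorphism $[m_0]$ of $\hat E$ over $\ZZ_p$ has unit linear coefficient, hence is an automorphism of the formal group and preserves $\ord_p\circ t$; so it suffices to control $\ord_p\bigl(t([p]R)\bigr)$ for $R\in\hat E(p\ZZ_p)$ in terms of $e:=\ord_p(t(R))\ge1$. Here I would invoke the standard structure of the $p$-multiplication on an elliptic formal group (see, e.g., \cite{AEC}): over $\ZZ_p$ one has $[p]_{\hat E}(T)=pT+(\text{terms of degree}\ge2)$, while modulo $p$ the series $[p]_{\hat E}$ reduces to $g(T^{p^h})$ with $g(T)=uT+\cdots$ for a unit $u$, where $h=1$ if $E$ has ordinary or multiplicative reduction at $p$ and $h=2$ if $E$ has supersingular reduction (in the additive case $[p]_{\hat E}\equiv0\pmod p$). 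Thus every coefficient of $T^k$ with $2\le k<p^h$ is divisible by $p$ while the coefficient of $T^{p^h}$ is a unit, and substituting $t(R)$ and tracking valuations gives $\ord_p\bigl(t([p]R)\bigr)=\min(e+1,\,p^h e)$ as soon as these two numbers differ. They coincide only when $p^h=2$, i.e.\ $p=2$, $h=1$, $e=1$.

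The borderline case $p=2$, $h=1$, $e=1$ is the one place a genuine computation is needed, and I expect it to be the main obstacle: there the two lowest terms of $[2]_{\hat E}(T)=2T+c_2T^2+\cdots$ have equal valuation on $t(R)$, and one must exhibit extra cancellation. Writing $t(R)=2u$ with $u$ a $2$-adic unit, and using that $h=1$ (i.e.\ ordinary or multiplicative reduction at $2$) forces $c_2$ to be a unit, one finds $[2]_{\hat E}(2u)=4u(1+c_2u)+(\text{terms of valuation}\ge3)$; since $u$ and $c_2$ are odd, $1+c_2u$ is even, whence $\ord_2\bigl(t([2]R)\bigr)\ge3>e+1$. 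If instead $E$ has supersingular or additive reduction at $2$ then $c_2$ is even, the term $2T$ dominates, and $\ord_2\bigl(t([2]R)\bigr)=e+1$; and for $p$ odd, or for $p=2$ with $e\ge2$, the bookkeeping above already gives $\ord_p\bigl(t([p]R)\bigr)=e+1$.

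Finally I would iterate, applying $[p]$ exactly $a=\ord_p(m)$ times to $Q$ and then the automorphism $[m_0]$. The output of the first application has $\ord_p\circ t\ge2$, so every later application raises the valuation by exactly $1$. If the first application is not borderline — equivalently, it is false that simultaneously $p=2$, $\ord_2(D_n)=1$, and $E$ has ordinary or multiplicative reduction at $2$ — then every application (if any) adds exactly $1$, so $\ord_p(D_{mn})=\ord_p(D_n)+\ord_p(m)=\ord_p(mD_n)$, proving (a) with equality. If the first application is borderline and $a\ge1$, it alone carries the valuation from $1$ to at least $3$, so $\ord_2(D_{mn})\ge\ord_2(D_n)+\ord_2(m)+1>\ord_2(mD_n)$. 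Since $a=0$ always yields equality, strictness holds exactly when $p=2$, $2\mid m$, $\ord_2(D_n)=1$, and $E$ has ordinary or multiplicative reduction at $2$, which is assertion (b).
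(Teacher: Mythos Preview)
Your proof is correct and follows essentially the same approach as the paper: pass to the formal group via $t=-x/y$, analyze $[p]_{\hat E}$ term by term to see that multiplication by $p$ raises $\ord_p$ by exactly one except in the borderline case $p=2$, $\ord_2=1$, ordinary/multiplicative reduction, and then iterate. The only cosmetic difference is that you organize the case analysis via the height $h$ of the formal group (so that $c_{p^h}$ is a unit), whereas the paper works directly with the Weierstrass coefficient $a_1$ in $[2]_{\hat E}(z)=2z-a_1z^2+\cdots$ and checks that $a_1\in\ZZ_2^*$ precisely in the ordinary/multiplicative case; these are the same computation viewed from two angles.
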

\begin{proof}
The assumption that~$p\mid D_n$ is equivalent to the assertion
that $[n]P$ is in~$E_1(\QQ_p)$, the kernel of reduction modulo~$p$.
We use the standard isomorphism between~$E_1(\QQ_p)$ and the formal
group~$\hat E(p\ZZ_p)$ associated to~$E$ given by
\[
  \f : E_1(\QQ_p)\longrightarrow\hat E(p\ZZ_p),\qquad
  (x,y) \longrightarrow -x/y.
\]
Note that this isomorphism is valid even if~$E$ has bad reduction
at~$p$, in which case~$\hat E$ is the formal additive or
multiplicative group. (See \cite[Chapter~IV]{AEC} for basic properties
of formal groups.)  
\par
Writing~$x([n]P)=(A_n/D_n^2,B_n/D_n^3)$, our assumption that $p\mid D_n$
implies that $p\notdivide A_nB_n$, so
\begin{equation}
  \label{eqn:ordpphinPAnBnBn}
  \ord_p\f\bigl([n]P\bigr) = \ord_p(-A_nD_n/B_n) = \ord_p(D_n).
\end{equation}
\par
Standard properties of formal groups~\cite[IV.2.3(a),~IV.4.4]{AEC} say
that the multi\-pli\-ca\-tion-by-$p$ map has the form
\begin{equation}
  \label{eqn:pEzpfzgzp}
  [p]_{\hat E}(z) = pf(z) + g(z^p),
\end{equation}
where $f,g\in\ZZ_p[[z]]$ are power series with no constant term, and $f$ has the form $f(z) = z+O(z^2)$.
It follows that for $\ord_p(z) \ge 1$, we have
\begin{equation}
  \label{eqn:ordpphEz}
  \ord_p\bigl([p]_{\hat E}(z)\bigr) \ge \ord_p(pz).
\end{equation}
We write $m=p^ks$ with $p\nmid s$.
Repeated application of~\eqref{eqn:ordpphEz} gives
\begin{equation}
  \label{eqn:ordp1}
  \ord_p\bigl([p^k]_{\hat E}(z)\bigr) \ge \ord_p(p^kz).
\end{equation}
Further, we have $[s]_{\hat E}(z)=sz+O(z^2)$, so
\begin{equation}
  \label{eqn:ordp2}
  \ord_p\bigl([sp^k]_{\hat E}(z)\bigr) = \ord_p([p^k]_{\hat E}z).
\end{equation}
Combining~\eqref{eqn:ordp1} and~\eqref{eqn:ordp2} gives
\[
  \ord_p\bigl([m]_{\hat E}(z)\bigr) \ge \ord_p(p^kz),
\]
with equality if~$k=0$.  Substituting~$z=\f\bigl([n]P\bigr)$
and using~\eqref{eqn:ordpphinPAnBnBn} gives~(a), and it also gives~(b) if
$p\nmid m$.
\par
To prove~(b) in general, we assume that $p\mid m$.
Analyzing~\eqref{eqn:pEzpfzgzp} more closely, we see that
\begin{equation}
  \label{eqn:ordpphEz2}
  \ord_p\bigl([p]_{\hat E}(z)\bigr) = \ord_p(pz)=\ord_p(z)+1
\end{equation}
unless $\ord_p(pz) = p\ord_p(z)$. (Note that $\ord_p(z)\ge1$.) Since
\[
  \ord_p(pz)=p\ord_p(z)
  \quad\Longleftrightarrow\quad
  1=(p-1)\ord_p(z),
\]
we see that~\eqref{eqn:ordpphEz2} holds except possibly in the
case $p=2$ and $\ord_p(z)=1$.  
\par
Suppose now that $p=2$ and $\ord_p(z)=1$. 
The formal group law for an elliptic curve starts~\cite[\S{IV.1}]{AEC}
\[
  [2]_{\hat E}(z) = 2z - a_1z^2 -2a_2z^3 + (a_3+a_1a_2)z^4 + \cdots,
\]
where~$a_1,\ldots,a_6$ are Weierstrass coefficients. 
Hence under the assumption that $\ord_2(z)\ge1$, we see
that~\eqref{eqn:ordpphEz2} fails if and only if
\begin{align*}
  \ord_2\bigl(2z-a_1z^2+O(2z^3)\bigr) \ge 3
  &\iff
  \ord_2(1-a_1z/2) \ge 1\\
  &\iff
  \ord_2(a_1)=0,\quad\text{i.e., $a_1\in\ZZ_2^*$.}
\end{align*}
(The last implication follows because $z\equiv2\pmod4$, so
$1-a_1z/2\equiv1-a_1\pmod2$.)  If~$E$ has good reduction modulo~$2$,
then $j(E)\equiv a_1^{12}/\Disc(E)\pmod2$, so~\cite[Exer.~5.7]{AEC}
gives
\[
  \ord_2(a_1)=0 \iff j(E)\not\equiv0\pmod2 \iff 
  \text{$E$ is ordinary mod 2.}
\]
On the other hand, if $E$ has bad reduction at~$2$, then an easy
computation shows that $a_1\equiv1\pmod2$ for multiplicative reduction
and $a_1\equiv0\pmod2$ for additive reduction. This completes the
proof that~\eqref{eqn:ordpphEz2} fails if and only if $p=2$ and $p\mid
m$ and $\ord_p(D_n)=1$ and $E$ has either ordinary or multiplicative
reduction. We call this the exceptional case.
\par
Repeated application of~\eqref{eqn:ordpphEz2} shows that if we are not
in the exceptional case, then
\[
  \ord_p\bigl([p^k]_{\hat E}(z)\bigr)  =\ord_p(z)+k.
\]
In the exceptional case, the first multiplication by~$[p]$ gives
a strict inequality, after which we are out of the exceptional case
and can apply~\eqref{eqn:ordpphEz2}, so we find that
\[
  \ord_p\bigl([p^k]_{\hat E}(z)\bigr)  
  = \ord_p\bigl([p]_{\hat E}(z)\bigr)+k-1 > \ord_p(z)+k.
\]
Now using~\eqref{eqn:ordp2} and the fact that $m=p^ks$ with $p\nmid
s$, we get
\[
  \ord_p\bigl([m]_{\hat E}(z)\bigr)  > \ord_p(mz)
\]
in the exceptional case and
\[
  \ord_p\bigl([m]_{\hat E}(z)\bigr)  =\ord_p(mz)
\]
otherwise.  Substituting~$z=\f\bigl([n]P\bigr)$ and
using~\eqref{eqn:ordpphinPAnBnBn} proves~(b).
\end{proof}

The third lemma gives bounds for~$r_p$.

\begin{lemma}
\label{lemma:weilbd}
Let~$\EDS$ be a minimal EDS associated to an elliptic curve~$E/\QQ$
and point~$P\in E(\QQ)$ and let~$p$ be a prime.  Then
\[
  r_n\mid\#\Neron(\ZZ/n\ZZ).
\]
In particular, if~$p$ is a prime with $P\in E_\ns(\FF_p)$, then
\[
  r_p \le (\sqrt{p}+1)^2.
\]
If $P\in E_\ns(\FF_p)$ and $E$ has bad reduction at~$p$, then~$r_p$
divides~$p-1$, $p+1$, or $p$ depending respectively on whether the
reduction is split multiplicative, non-split multiplicative, or
additive.
\end{lemma}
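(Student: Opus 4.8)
The plan is to reduce the entire statement to Lagrange's theorem applied to the finite group $\Neron(\ZZ/n\ZZ)$, together with the Hasse bound and the classical description of the special fibre of a minimal Weierstrass model. First I would invoke the equivalent description of the rank of apparition recorded just after its definition: since $P\in E(\QQ)=\Neron(\ZZ)$, reduction modulo $n$ produces a well-defined element $\tilde P\in\Neron(\ZZ/n\ZZ)$, and by Lemma~\ref{lemma:nDmrnmmPOn} one has $n\mid D_m\iff[m]\tilde P=O$ in $\Neron(\ZZ/n\ZZ)$. Hence $r_n$ is exactly the order of $\tilde P$ in the finite abelian group $\Neron(\ZZ/n\ZZ)$, and Lagrange's theorem gives $r_n\mid\#\Neron(\ZZ/n\ZZ)$ (and incidentally shows $r_n<\infty$).

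For the quantitative bound I would specialize to $n=p$, so that $r_p$ is the order of $\tilde P$ in $\Neron(\FF_p)$. Because the EDS is minimal, the defining Weierstrass equation is minimal, so the nonsingular locus $E_\ns$ of the reduced curve coincides with the identity component $\Neron^0$ over $\FF_p$; thus the hypothesis $\tilde P\in E_\ns(\FF_p)$ says precisely that $\tilde P$ lies in the subgroup $\Neron^0(\FF_p)\subseteq\Neron(\FF_p)$, and therefore $r_p\mid\#E_\ns(\FF_p)$. It then remains to bound $\#E_\ns(\FF_p)$ by $(\sqrt p+1)^2=p+2\sqrt p+1$ in each reduction type. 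For good reduction $E_\ns(\FF_p)=E(\FF_p)$, and the Hasse bound $|\#E(\FF_p)-p-1|\le 2\sqrt p$ gives $\#E(\FF_p)\le(\sqrt p+1)^2$. For bad reduction the nonsingular part of the reduced Weierstrass curve is a one-dimensional group whose order is classical: $\#E_\ns(\FF_p)=p-1$ for split multiplicative reduction ($E_\ns\cong\GG_m$), $p+1$ for non-split multiplicative reduction ($E_\ns$ the norm-one torus in $\FF_{p^2}^{*}$), and $p$ for additive reduction ($E_\ns\cong\GG_a$); each of these is at most $(\sqrt p+1)^2$.

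The last assertion of the lemma then falls out of the same identification: if $E$ has bad reduction at $p$ and $\tilde P\in E_\ns(\FF_p)=\Neron^0(\FF_p)$, then $\ord(\tilde P)=r_p$ divides $\#\Neron^0(\FF_p)$, which equals $p-1$, $p+1$, or $p$ according as the reduction is split multiplicative, non-split multiplicative, or additive. I do not anticipate a genuine obstacle here; the only points that require care are the identification of $E_\ns$ with $\Neron^0$ over $\ZZ$ — which is exactly where minimality of the Weierstrass equation is used — and the explicit values of $\#E_\ns(\FF_p)$ for nodal and cuspidal curves, both of which I would simply cite from \cite[Ch.~VII]{AEC}. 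Everything else is Lagrange's theorem and Hasse's theorem.
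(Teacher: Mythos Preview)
Your proposal is correct and follows essentially the same route as the paper's proof, which simply notes that $r_n$ is the order of $P$ in $\Neron(\ZZ/n\ZZ)$ and then invokes the Hasse--Weil bound together with the explicit description of $E_\ns(\FF_p)$ for each bad reduction type. You have spelled out more carefully the identification $E_\ns=\Neron^0$ (using minimality) and the passage from $\Neron(\FF_p)$ to the subgroup $E_\ns(\FF_p)$ via the hypothesis on $\tilde P$, but the underlying argument is the same.
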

\begin{proof}
The first statement is immediate, since~$r_n$ is the order of the
point~$P$ in the group~$\Neron(\ZZ/n\ZZ)$.  The estimates for~$r_p$
follow from the Hasse--Weil bound $\#E(\FF_p)\le(\sqrt p+1)^2$
when~$E$ has good reduction, and the explicit description
of~$E_\ns(\FF_p)$ for the three types of bad reduction. 
\end{proof}

\begin{example}
The minimal~EDS associated to
\[
  E: y^2 + xy = x^3 - 2x + 1
  \quad\text{and}\quad
  P = (1,0)
\]
is the sequence
\[
  1,1,1,2,1,3,7,8,25,37,\ldots\,.
\]
Thus~$D_4=2$ and $D_8=8$, so
\[
  3 = \ord_2(D_{2^3}) > \ord_2(2D_{2^2}) = 2.
\]
The strictness of the inequality in Lemma~\ref{lemma:formalgp}(a)
corresponds to the exceptional case $p=2$, $m=2$, and $n=4$, where we
note that $\ord_2(D_4)=1$ and~$\#E(\FF_2)=4$, so in particular~$E$ has
ordinary reduction at~$2$.
\end{example}

\begin{remark}
More generally, for any integer~$N\ge2$ there exists a minimal~EDS
such that
\[
  \ord_2(D_{2}) = \ord_2(D_1) + N.
\]
Here is one construction.  Choose an elliptic curve of positive rank
having a rational $2$-torsion point $T$ in the formal group~$\hat
E(2\ZZ_2)$. Taking a multiple of a point of infinite order, we can
find a rational nontorsion point~$Q$ in~$\hat E(2^{N}\ZZ_2)$. Then
the EDS associated to $P=Q+T$ will have $\ord_2(D_1)=1$ and
$\ord_2(D_2)=N+1$. The reason that this only works for the prime $p=2$
is because for $p\ge3$, the formal group~$\hat E(\ZZ_p)$ is torsion
free; in fact, it is isomorphic to the additive group~$\ZZ_p^+$.
\end{remark}

\begin{proposition}
Let $\EDS$ be a minimal EDS. 
\begin{parts}
\Part{(a)}
$\EDS$ is a divisibility sequence.
\Part{(b)}
The set $\Scal(\EDS)$ is closed under multiplication.
\end{parts}
\end{proposition}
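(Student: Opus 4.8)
The plan is to reduce both parts to a prime-by-prime comparison of $p$-adic valuations and then invoke the formal-group divisibility estimate of Lemma~\ref{lemma:formalgp}(a) once in each case, with the index that is divisible by $p$ playing the role of the ``$n$'' in that lemma; part~(a) can alternatively be read off directly from Lemma~\ref{lemma:nDmrnmmPOn}.

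For~(a), suppose $m\mid n$. Since the $D_k$ are positive integers, it is enough to show $\ord_p(D_m)\le\ord_p(D_n)$ for every prime $p$, and this is immediate when $p\nmid D_m$. If $p\mid D_m$, then Lemma~\ref{lemma:formalgp}(a), applied with index $m$ and multiplier $n/m$ (a positive integer because $m\mid n$), gives
\[
  \ord_p(D_n)=\ord_p\bigl(D_{(n/m)m}\bigr)\ge\ord_p\bigl((n/m)D_m\bigr)\ge\ord_p(D_m).
\]
Equivalently, for every prime power $q$ with $q\mid D_m$ we have $r_q\mid m\mid n$, hence $q\mid D_n$ by Lemma~\ref{lemma:nDmrnmmPOn}; either way $D_m\mid D_n$.

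For~(b), let $m,n\in\Scal(\EDS)$, so that $m\mid D_m$ and $n\mid D_n$, and fix a prime $p$; the goal is $\ord_p(D_{mn})\ge\ord_p(m)+\ord_p(n)$. If $p\nmid m$ the right-hand side is $\ord_p(n)$, and since $n\mid mn$ part~(a) gives $D_n\mid D_{mn}$, whence $\ord_p(D_{mn})\ge\ord_p(D_n)\ge\ord_p(n)$. If $p\mid m$, then $p\mid D_m$ (as $m\mid D_m$), so Lemma~\ref{lemma:formalgp}(a) with index $m$ and multiplier $n$ yields
\[
  \ord_p(D_{mn})\ge\ord_p(nD_m)=\ord_p(n)+\ord_p(D_m)\ge\ord_p(n)+\ord_p(m).
\]
In both cases $\ord_p(D_{mn})\ge\ord_p(mn)$, so $mn\mid D_{mn}$ and $mn\in\Scal(\EDS)$.

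I do not expect a real obstacle here: the only things to be careful about are which of the two indices carries the prime $p$ when Lemma~\ref{lemma:formalgp}(a) is applied, and the fact that part~(b) is entitled to quote part~(a). It is tempting to prove~(b) entirely through ranks of apparition via Lemma~\ref{lemma:nDmrnmmPOn}, but deducing $r_{p^{a+b}}\mid mn$ from $r_{p^{a}}\mid m$ and $r_{p^{b}}\mid n$ (where $a=\ord_p m$, $b=\ord_p n$) is precisely the $p$-power growth statement packaged in Lemma~\ref{lemma:formalgp}, so the valuation argument above is the most economical route.
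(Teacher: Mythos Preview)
Your proof is correct and follows essentially the same approach as the paper: reduce to a prime-by-prime valuation comparison and invoke Lemma~\ref{lemma:formalgp}(a) with the index carrying~$p$ playing the role of the lemma's~``$n$''. The only cosmetic differences are that in~(b) you handle the case $p\nmid m$ via part~(a) rather than by swapping $m$ and $n$, and that you offer an alternate rank-of-apparition argument for~(a); both are minor variations on the paper's proof.
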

\begin{proof}
(a)\enspace
We need to prove that $D_m\mid D_{mn}$. It suffices to prove
that $\ord_p(D_{mn})\ge\ord_p(D_n)$ for all primes~$p$, but this is 
immediate from Lemma~\ref{lemma:formalgp}(a).
\par\noindent(b)\enspace
Suppose that $m,n\in\Scal(\EDS)$ and let~$p\mid n$. Then $p\mid D_n$,
so Lemma~\ref{lemma:formalgp}(a) and the assumption that $n\mid D_n$
give
\[
  \ord_p(D_{mn}) \ge \ord_p(mD_n) \ge \ord_p(mn).
\]
Reversing the roles of~$m$ and~$n$ for $p\mid m$ again gives
$\ord_p(D_{mn})\ge\ord_p(mn)$. Hence~$mn\mid D_{mn}$,
so~$mn\in\Scal(\EDS)$.
\end{proof}
 
\begin{remark}
If $p\ge3$ and $p\mid D_1$, then Lemma~\ref{lemma:formalgp}(b) with~$n=1$
says that $\ord_p(D_m)=\ord_p(mD_1)$ for all~$m$.
\end{remark}

\begin{remark}
Although we will not need this fact, we mention that elliptic
divisibility sequences grow extremely rapidly.  Thus if~$\EDS$ is
associated to~$(E,P)$, then
\[
  \lim_{n\to\infty} \frac{\log|D_n|}{n^2} = \hhat_E(P),
\]
where~$\hhat_E(P)>0$ is the canonical height of~$P$~\cite[VIII~\S9]{AEC}.
\end{remark}

\section{Aliquot Cycles and Aliquot Numbers for EDS}
\label{section:aliqseqs}

In this section we define aliquot cycles and aliquot numbers
associated to an EDS.

\begin{definition}
Let $\EDS$ be an EDS associated to the curve~$E(\QQ)$ and point $P\in
E(\QQ)$.  We recall that~$r_n(\EDS)$ denotes the rank of apparition
of~$n$ in the sequence~$\EDS$; see Section~\ref{section:edsprelims}.
An \emph{aliquot cycle} (\emph{of length~$\ell$})
for~$\EDS$ is a sequence $(p_1,p_2,\ldots,p_\ell)$ of distinct primes
of good reduction for~$E$ such that
\[
  r_{p_1}(\EDS) = p_2,\semiquad
  r_{p_2}(\EDS) = p_3,\semiquad\ldots,\semiquad
  r_{p_{\ell-1}}(\EDS) = p_\ell,\semiquad
  r_{p_\ell}(\EDS) = p_1.
\]
An \emph{amicable pair} is an aliquot cycle of length two.
\par
If we drop the requirement that~$E$ have good reduction, then
we call  $(p_1,p_2,\ldots,p_\ell)$ a \emph{generalized aliquot cycle}.
\end{definition}

In our study of index divisibility for EDS, the products of the primes
appearing in each aliquot cycle play a key role, so we give them a
name.

\begin{definition}
Let~$\EDS$ be a minimal EDS.  We define the set of \emph{aliquot
  numbers} of~$\EDS$ to be
\[
  \Aliq(\EDS) = \{p_1\cdots p_\ell :
   \text{$(p_1,\ldots,p_\ell)$ is an aliquot cycle for $\EDS$}\}.
\]
We also define the larger set
\[
  \AliqGen(\EDS)  = \{p_1\cdots p_\ell :
   \text{$(p_1,\ldots,p_\ell)$ is a generalized aliquot cycle for $\EDS$}\}.
\]
\end{definition}

\begin{remark}
We observe that an aliquot cycle of length one consists of a single
prime~$p$ satisfying $r_p(\EDS)=p$. If $p\ge7$, Hasse's estimate for
$\#E(\FF_p)$ tells us that
\[
  r_p(\EDS)=p \iff \#E(\FF_p)=p.
\]  
Thus in standard terminology, the primes $p\ge7$ in~$\Aliq(\EDS)$ are
exactly the \emph{anomalous primes} for the elliptic curve~$E$.  
\end{remark}

\section{Arrows in the Index Divisibility Graph}
\label{section:arrowsingraph}

This section contains our main results. In
Theorem~\ref{theorem:buildarrows} we classify the
arrows~$(n\to nd)\in\Arrow(\EDS)$ for a large class of~EDS,
as described in the following definition.

\begin{definition}
Let~$\EDS$ be a minimal EDS associated to the elliptic curve and point
$(E,P)$. We say that $D$ is \emph{$2$-irregular} if the following five
irregularity conditions are true:
\begin{center}
  \begin{tabular}{lll}
    (\IR1) $E$ has good reduction at $2$,
    &(\IR2) $\#E(\FF_2)=4$,
    &(\IR3) $r_2=4$,\\[1\jot]
    (\IR4) $D_2$ is odd,
    &(\IR5) $\ord_2(D_4)=1$,
  \end{tabular}
\end{center}
If any of the conditions (\IR1)--(\IR5) is false, then we say that $\EDS$ is
\emph{$2$-regular}. If in addition we have
\[
  P\in E_\ns(\FF_p)\qquad\text{for all primes $p\mid\Disc(E)$,}
\]
then we simply say that~$\EDS$ is \emph{regular}.
\end{definition}

\begin{remark}
Our main result, Theorem~$\ref{theorem:buildarrows}$, gives a good
description of the index divisibility graph~$\Scal(\EDS)$ for
regular~EDS. Our decision to restrict attention to regular EDS
represents a compromise between our desires for generality and
conciseness, as well as the need to keep our exposition to a
reasonable length. We remark that much of our analysis goes through
for non-regular~EDS, in the sense that
Theorem~$\ref{theorem:buildarrows}$ is still true for many (but
generally not all) values of~$n$, and that a long case-by-case
analysis would give a lengthy statement that applies to most (maybe
even all) values of~$n$. In any case, we note that
every~$\EDS=(D_n)_{n\ge1}$ contains a regular subsequence
$\EDS'=(D_{nk})_{n\ge1}$, and then Theorem~$\ref{theorem:buildarrows}$
applies to this subsequence.
\end{remark}

We start with a description of the index divisibility set of an~EDS
that will be a key tool for our classification. Its proof uses only
the formal group properties of an EDS (Lemma~\ref{lemma:formalgp}).

\begin{proposition}
\label{proposition:amiq}
Let~$\EDS$ be a minimal regular EDS associated to the elliptic curve
and point $(E,P)$.  Then the following are equivalent\textup:
\begin{parts}
\Part{(a)}
$n\mid D_n$, i.e, $n\in\Scal(\EDS)$.
\Part{(b)}
There is some exponent~$e\ge1$ such that $n\mid D_n^e$.
\Part{(c)}
Every prime dividing $n$ also divides $D_n$.
\Part{(d)}
For all primes~$p$, we have $p\mid n \implies r_p\mid n$.
\end{parts}
\end{proposition}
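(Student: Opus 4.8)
The plan is to establish the cycle of implications $(a)\Rightarrow(b)\Rightarrow(c)\Rightarrow(d)\Rightarrow(a)$. Two of these are essentially free: $(a)\Rightarrow(b)$ is trivial (take $e=1$), and $(d)\Rightarrow(a)$ follows by a prime-by-prime check using Lemma~\ref{lemma:formalgp}(a): if $p\mid n$ then $r_p\mid n$, hence $p\mid D_n$ by Lemma~\ref{lemma:nDmrnmmPOn}, and then writing $n=p^k s$ with $p\nmid s$ and applying Lemma~\ref{lemma:formalgp}(a) with the index $r_p$ in place of $n$ gives $\ord_p(D_n)\ge\ord_p\bigl((n/r_p)D_{r_p}\bigr)\ge \ord_p(n/r_p)+\ord_p(D_{r_p})\ge k$, so $\ord_p(D_n)\ge\ord_p(n)$; since this holds for every $p\mid n$, we get $n\mid D_n$. (One must be slightly careful only at $p=2$, where Lemma~\ref{lemma:formalgp}(b) identifies an exceptional case in which the inequality is strict, which only helps; and the hypothesis that $\EDS$ is $2$-regular kills the exceptional configuration, so nothing goes wrong.)

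For $(b)\Rightarrow(c)$: if $p\mid n$ and $n\mid D_n^e$, then $p\mid D_n^e$, hence $p\mid D_n$ since $p$ is prime; that direction is immediate and does not even need regularity.

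The implication that carries the real content is $(c)\Rightarrow(d)$: knowing that every prime divisor of $n$ divides $D_n$, we must deduce the stronger divisibility $r_p\mid n$ for each such $p$. By Lemma~\ref{lemma:nDmrnmmPOn}, $p\mid D_n$ already gives $r_p\mid n$ for \emph{that} prime $p$ directly — wait, this is the crux: $p\mid D_n \iff r_p\mid n$ is exactly Lemma~\ref{lemma:nDmrnmmPOn}, so once we know $p\mid D_n$ we get $r_p\mid n$ for free. So in fact $(c)\Rightarrow(d)$ is also immediate from Lemma~\ref{lemma:nDmrnmmPOn}, and conversely $(d)\Rightarrow(c)$ the same way. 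The whole proposition therefore collapses to routine bookkeeping built on Lemmas~\ref{lemma:nDmrnmmPOn} and~\ref{lemma:formalgp}; the only genuine subtlety — and the one place regularity is used — is in $(d)\Rightarrow(a)$ at the prime $2$, where one must verify that the strict-inequality exceptional case of Lemma~\ref{lemma:formalgp}(b) cannot obstruct the conclusion. I would organize the write-up around that cycle, doing $(a)\Rightarrow(b)\Rightarrow(c)\Leftrightarrow(d)$ as one-line deductions from Lemma~\ref{lemma:nDmrnmmPOn}, and spending the bulk of the argument on $(d)\Rightarrow(a)$ with an explicit $\ord_p$ estimate and a short paragraph handling $p=2$ via the $2$-regularity hypothesis.
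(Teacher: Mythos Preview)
Your overall strategy is correct and matches the paper's: the easy implications are disposed of via Lemma~\ref{lemma:nDmrnmmPOn}, and the substantive direction is a prime-by-prime $\ord_p$ estimate using Lemma~\ref{lemma:formalgp}(a). However, there is a real gap in your chain
\[
  \ord_p(D_n)\ge \ord_p(n/r_p)+\ord_p(D_{r_p})\ge k.
\]
The last inequality is equivalent to $\ord_p(D_{r_p})\ge\ord_p(r_p)$. You know $\ord_p(D_{r_p})\ge1$, so this is fine when $\ord_p(r_p)\le1$, but it is \emph{not} automatic when $p^2\mid r_p$. That possibility is exactly what $2$-regularity rules out: since $r_p$ divides $\#E_\ns(\FF_p)$ (here the regularity hypothesis $P\in E_\ns(\FF_p)$ is used), the Hasse--Weil bound forces $p=2$, $\#E(\FF_2)=4$, $r_2=4$; and then one still needs $\ord_2(D_4)\ge2$, which is precisely the negation of (\IR5). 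The paper carries this out by splitting into the cases $p\mid D_{pk}$ versus $p\nmid D_{pk}$ and showing the latter forces all five irregularity conditions.

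Your diagnosis of where $2$-regularity enters is therefore off: it has nothing to do with the strict-inequality clause of Lemma~\ref{lemma:formalgp}(b) (which, as you say, could only help), and everything to do with bounding $\ord_p(r_p)$. Concretely, in the $2$-irregular situation with $\ord_2(n)=2$ your displayed inequality fails outright: $\ord_2(D_n)=\ord_2(D_4)=1<2$. So the vague appeal to ``$2$-regularity kills the exceptional configuration'' needs to be replaced by the Hasse--Weil argument that $p^2\mid r_p$ forces (\IR1)--(\IR5).
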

\begin{proof}
Statements~(b) and~(c) are obviously equivalent, and~(c)
and~(d) are equivalent by Lemma~\ref{lemma:nDmrnmmPOn}.
It is also clear that~(a) implies~(b).
It remains to show that~(b) implies~(a), i.e., that
\[
  n \mid D_n^e \longrightarrow n\mid D_n.
\]
It suffices to prove that for all primes~$p$ we have
\begin{equation}
  \label{eqn:pgcdnDn}
  p \mid \gcd(n,D_n) \implies \ord_p(D_n) \ge \ord_p(n).
\end{equation}
So we let $p$ be a prime dividing both~$n$ and~$D_n$ and we write $n =
p^\nu k$ with $p\nmid k$ and $\nu \ge 1$.  If~$\nu=1$,
then~\eqref{eqn:pgcdnDn} is obviously true (note $p\mid D_n$), so we
may assume that $\nu\ge2$.
\par
We consider first the case that $p\mid D_{pk}$. Applying
Lemma~\ref{lemma:formalgp}(a) to $D_n=D_{p^{\nu-1}\cdot pk}$, we obtain
\[
  \ord_p(D_n)
  = \ord_p(D_{p^{\nu}k})
  \ge \ord_p(p^{\nu-1}D_{pk})
  \ge \nu
  = \ord_p(n).
\]
This shows that~\eqref{eqn:pgcdnDn} is true in this case.
\par
We next suppose that $p\nmid D_{pk}$, and we will show that
either~\eqref{eqn:pgcdnDn} is true or else~$\EDS$ is $2$-irregular.
The assumption that $p\nmid D_{pk}$ is equivalent to $r_p\nmid
pk$. But we are assuming that~$p\mid D_{p^{\nu}k}$, so we have
$r_p\mid p^{\nu}k$. It follows that $p^2\mid r_p$, which is a very
strong condition. In particular, since the regularity assumption
implies that that $P\in E_\ns(\FF_p)$, and since~$r_p$ is the order
of~$P$ in~$E(\FF_p)$, we find that
\[
  p^2 \mid \#E_\ns(\FF_p).
\]
Hence~$E$ has nonsingular reduction modulo~$p$, and using the
Hasse--Weil estimate, we further deduce that~$p=2$
and~$r_2=\#E(\FF_2)=4$. This gives 
conditions~(\IR1),~(\IR2), and~(\IR3) in the definition of
$2$-irregu\-la\-rity. Further,~$D_2$ must be odd, since
otherwise~$r_2$ would divide~$2$, so we get condition~(\IR4).
\par
Since $2\mid D_4$, so $2\mid D_{4k}$, we can apply
Lemma~\ref{lemma:formalgp}(a) to $D_n=D_{2^{\nu-2}\cdot 4k}$ to obtain
\[
  \ord_2(D_n)
  = \ord_2(D_{2^{\nu}k})
  \ge \ord_2(2^{\nu-2}D_{4k})
  = \ord_2(D_{4k}) + \nu - 2.
\]
If $\ord_2(D_4)\ge2$, then this implies that
$\ord_2(D_n)\ge\ord_2(n)$, so~\eqref{eqn:pgcdnDn} is true and we are
done. Otherwise $\ord_2(D_4)=1$ and we have verified condition~(\IR5)
for~$\EDS$ to be $2$-irregular. This is a contradiction, since
we have assumed that~$\EDS$ is regular, which completes the
proof of  Proposition~\ref{proposition:amiq}.
\end{proof}

We are now ready to state and prove our main theorem.

\begin{theorem}
\label{theorem:buildarrows}
Let~$\EDS$ be a minimal regular EDS associated to the elliptic curve and
point $(E,P)$.  
\begin{parts}
\Part{(a)}
Let $n\ge1$. Then
\[
  n\in\Scal(\EDS)\text{ and }
  \left(\begin{tabular}{@{}c@{}}
  \text{$p\mid D_n$ or $E$ has} \\
  \text{additive reduction} \\
  \end{tabular}\right)
  \implies
    (n\to np)\in\Arrow(\EDS).
\]
\Part{(b)} 
Let $n\ge1$ and $d\ge1$. Then
\[
  n\in\Scal(\EDS)\text{\quad and\quad} d\in\AliqGen(\EDS)
  \implies
  \left(\begin{tabular}{@{}l@{}}
    $nd\in\Scal(\EDS)$ and\\ $\gcd(d,n)=1$ or $d$\\
  \end{tabular}\right).
\]
Furthermore, 
\begin{align*}
  n\in\Scal(\EDS)\text{\quad and\quad} d\in\AliqGen(\EDS)
  &\text{\quad and\quad} \gcd(d,n)=1\\
  &\implies
  (n\to~nd)\in~\Arrow(\EDS).
\end{align*}
\Part{(c)}
Let $n\ge1$ and let $p$ be a prime such that
\[
  n\in\Scal(\EDS),\quad
  p\nmid D_n,\quad\text{and}\quad
 (n\to np)\in\Arrow(\EDS).
\]
\vspace{-10pt}
\begin{parts}
\Part{(1)}
If $E$ has good reduction at $p$ and $\#E(\FF_p)\ne2p$, then
\[
  p\in\Aliq(\EDS).
\]
\textup(If $p\ge7$, then we always have $\#E(\FF_p)\ne2p$.\textup)
\Part{(2)}
If $E$ has bad reduction at~$p$, then
\[
  \text{$E$ has additive reduction at $p$.}
\]
\end{parts}
\Part{(d)}
Let $n\ge1$ and $d\ge1$ with $d$ composite.
Define
\[
  t = (\textup{number of primes~$p\mid d$ such that~$r_p$ is composite}),
\]
and 
\[
  p_0 = (\textup{smallest prime divisor of~$nd$}).
\]
Suppose that
\[
  (n\to nd)\in\Arrow(\EDS).
\]
Then one of the following statements is true\textup:
\begin{parts}
\Part{(i)}
$t=0$  and $d\in\AliqGen(\EDS)$.
\Part{(ii)} 
$t\ge1$ and
\begin{equation}
  \label{eqn:minpigt212l12x} 
  \prod_{p\mid d} \left(1+\frac{1}{\sqrt{p}}\right)^2
  \ge \prod_{p\mid d} \frac{\#E(\FF_{p})}{p} 
  \ge p_0^t.
\end{equation}
\end{parts}
\end{parts}
\end{theorem}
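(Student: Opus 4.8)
The plan is to route everything through Proposition~\ref{proposition:amiq}, which for a regular EDS characterizes membership in $\Scal(\EDS)$ purely by ranks of apparition: $m\in\Scal(\EDS)$ iff $r_q\mid m$ for every prime $q\mid m$. Since $n\in\Scal(\EDS)$ already forces $r_q\mid n$ for $q\mid n$, it follows that for $d'\mid d$ one has $nd'\in\Scal(\EDS)$ iff $r_q\mid nd'$ for every prime $q\mid d'$ with $q\nmid n$. For part~(a) I would verify $np\in\Scal(\EDS)$ by checking $r_p\mid np$: if $p\mid D_n$ then $r_p\mid n$ by Lemma~\ref{lemma:nDmrnmmPOn}, while if $E$ has additive reduction at $p$ then $P\in E_\ns(\FF_p)$ by regularity and $r_p\mid p$ by Lemma~\ref{lemma:weilbd}; the arrow $(n\to np)$ is then automatic since $n$ and $np$ are the only multiples of $n$ dividing $np$. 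For part~(b), writing $d=p_1\cdots p_\ell$ for the generalized aliquot cycle (so $r_{p_i}=p_{i+1}$ cyclically), the implications $p_i\mid n\Rightarrow p_{i+1}=r_{p_i}\mid n$ propagate around the cycle, forcing $\gcd(d,n)\in\{1,d\}$ and, since each $r_{p_i}=p_{i+1}\mid d$, also $nd\in\Scal(\EDS)$; when $\gcd(d,n)=1$, an intermediate $nd'\in\Scal(\EDS)$ with $d'\mid d$, $1<d'<d$, would make $\{i:p_i\mid d'\}$ a nonempty proper subset of $\{1,\dots,\ell\}$ stable under $i\mapsto i+1$, which is impossible, so $(n\to nd)\in\Arrow(\EDS)$.

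For part~(c): the arrow gives $np\in\Scal(\EDS)$, so $r_p\mid np$, while $p\nmid D_n$ gives $r_p\nmid n$. This forces $p\nmid n$ (else $p\mid n$ and $n\in\Scal(\EDS)$ give $r_p\mid n$), and then $\ord_p(r_p)=1$ (since $r_p\mid np$ with $p\nmid n$ gives $\ord_p(r_p)\le1$, while the prime-to-$p$ part of $r_p$ divides $n$, so $r_p\nmid n$ gives $\ord_p(r_p)\ge1$). By regularity $P\in E_\ns(\FF_p)$, so $p\mid r_p\mid\#E_\ns(\FF_p)$. If $E$ has bad reduction at $p$ then $\#E_\ns(\FF_p)\in\{p-1,p,p+1\}$, and divisibility by $p$ forces $\#E_\ns(\FF_p)=p$, i.e.\ additive reduction. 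If $E$ has good reduction at $p$ then $p\mid\#E(\FF_p)$, and the Hasse bound $\#E(\FF_p)\le(\sqrt p+1)^2$ leaves only $\#E(\FF_p)\in\{p,2p\}$, with $2p$ possible only for $p\le5$; excluding $\#E(\FF_p)=2p$ forces $\#E(\FF_p)=p$, hence $r_p=p$, so the length-one list $(p)$ is an aliquot cycle and $p\in\Aliq(\EDS)$. For $p\ge7$, $2p>(\sqrt p+1)^2$, so $\#E(\FF_p)\ne2p$ automatically.

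Part~(d) is the substantive case. First, if a prime $q$ divides $\gcd(n,d)$ then $nq\in\Scal(\EDS)$ (all its primes divide $n$) and $n<nq<nd$ (since $d$ composite forces $q<d$), contradicting the arrow; so $\gcd(n,d)=1$. For each prime $q\mid d$ set $\rho_q=\prod_{\ell\mid d}\ell^{\ord_\ell(r_q)}$, the ``$d$-part'' of $r_q$, which divides $d$ because $r_q\mid nd$, and $\nu_q=r_q/\rho_q$, which divides $n$. By the characterization above, for $d'\mid d$ we have $nd'\in\Scal(\EDS)$ iff $\rho_q\mid d'$ for every prime $q\mid d'$; call such $d'$ \emph{closed}. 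The arrow says $1$ and $d$ are the only closed divisors, whence $\rho_q\notin\{1,q\}$ (take $d'=q$), so $r_q\ne1$ as $\rho_q\mid r_q$; moreover every prime $\ell\mid d$ divides some $\rho_q$ (else $d/\ell$ is closed), i.e.\ $e_\ell^{*}:=\sum_{q\mid d}\ord_\ell(\rho_q)\ge1$. If $t=0$, each $r_q$ is a prime and $\rho_q\ne1$ forces $r_q\mid d$, $\rho_q=r_q=:\sigma(q)$, a prime $\ne q$; the smallest closed divisor containing $q$ is the squarefree product of the forward $\sigma$-orbit of $q$, and since this equals $d$ for all $q$, $d$ is squarefree and $\sigma$ is a single $k$-cycle with $k\ge2$, so $(q_1,\dots,q_k)$ with $r_{q_i}=q_{i+1}$ is a generalized aliquot cycle and $d\in\AliqGen(\EDS)$. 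If $t\ge1$, the left inequality in~\eqref{eqn:minpigt212l12x} is Hasse--Weil, and since $r_p$ divides $\#E_\ns(\FF_p)\le\#E(\FF_p)$, the right one follows from $\prod_{p\mid d}r_p\ge p_0^{\,t}\prod_{p\mid d}p$, which I would prove by the telescoping
\[
  \frac{\prod_{p\mid d}r_p}{\prod_{p\mid d}p}
  =\Bigl(\prod_{\ell\mid d}\ell^{\,e_\ell^{*}-1}\Bigr)\Bigl(\prod_{p\mid d}\nu_p\Bigr)
  \ \ge\ p_0^{\;\sum_{\ell\mid d}(e_\ell^{*}-1)\,+\,\sum_{p\mid d}\Omega(\nu_p)}
  \ =\ p_0^{\;\sum_{p\mid d}\Omega(r_p)\,-\,\omega(d)}
  \ \ge\ p_0^{\,t},
\]
using $r_p=\rho_p\nu_p$, that every prime factor of every $r_p$ divides $nd$ (hence is $\ge p_0$), $\Omega(r_p)\ge1$ for all $p\mid d$, and $\Omega(r_p)\ge2$ for the $t$ primes with composite rank.

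I expect the main obstacle to be part~(d): recognizing the ``closed divisor'' reformulation that renders the arrow hypothesis combinatorial, and then finding the short identity in the last display that converts the only local information available---$e_\ell^{*}\ge1$ and $\Omega(r_p)\ge2$ for composite $r_p$---into exactly the weight $p_0^{\,t}$ appearing in~\eqref{eqn:minpigt212l12x}. Parts~(a)--(c) should be routine once Proposition~\ref{proposition:amiq}, Lemma~\ref{lemma:weilbd}, and the Hasse bound are in hand.
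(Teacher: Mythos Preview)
Your proof is correct and follows essentially the same route as the paper: both arguments hinge on Proposition~\ref{proposition:amiq} to reduce membership in $\Scal(\EDS)$ to the divisibility conditions $r_q\mid m$, and parts~(a)--(c) are handled in the same way (the paper does the $p\mid D_n$ case of~(a) by a direct formal-group estimate via Lemma~\ref{lemma:formalgp}, whereas you route it through Proposition~\ref{proposition:amiq}, but this is immaterial).

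The only noteworthy difference is the packaging of part~(d). The paper builds an auxiliary directed graph $\Gcal_d$ on the primes dividing $d$ (arrows $p\to q$ when $q\mid r_p$), proves a lemma that every vertex has an in-arrow and an out-arrow and that $\Gcal_d$ is connected, and then bounds $\prod_{q}q^{\In(q)-1}\prod_p M_p$ from below by $p_0^t$ via handshaking $\sum(\In(q)-1)=\sum(\Out(p)-1)$. Your ``closed divisor'' reformulation and the decomposition $r_q=\rho_q\nu_q$ encode exactly the same information (your $e_\ell^{*}\ge1$ is the in-arrow condition, the nonexistence of proper closed divisors is connectedness), but your telescoping identity $\sum_{p\mid d}\Omega(r_p)-\omega(d)\ge t$ is a cleaner way to reach $p_0^{\,t}$ than the in/out-degree bookkeeping. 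Both approaches are the same argument; yours is a mild streamlining.
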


\begin{proof}
(a)\enspace
Suppose first that~$n\in\Scal(\EDS)$ and~$p\mid D_n$.
Write $n=p^ik$ with $p\notdivide k$.  Then
\begin{align*} 
  \ord_p(D_{np}) &\ge \ord_p(D_n) + 1
    &&\text{from Lemma~\ref{lemma:formalgp}(a),} \\
  &\ge \ord_p(n) + 1
    &&\text{since $n\in\Scal(\EDS)$, i.e., $n\mid D_n$,}\\
  &= i+1.
\end{align*}
Further, $k\mid n\mid D_n\mid D_{np}$. Hence $p^{i+1}k\mid D_{np}$,
i.e., $np\mid D_{np}$, so $np\in\Scal(\EDS)$. And since there are
no proper divisors between~$n$ and~$np$, it follows that the 
directed graph~$\Scal(\EDS)$ contains the arrow $n\to np$.
\par
Next we consider the case that
$n\in\Scal(\EDS)$ and $p\nmid D_n$ and $E$ has additive
reduction.  Additive reduction implies that $\#E_{\ns}(\FF_p)=p$, so that $r_p \mid p$ and $p \mid D_p \mid D_{np}$.  Meanwhile, $n \mid D_n \mid D_{np}$ by assumption.  Since $p \nmid D_n$, it must be that $p \nmid n$.  Hence $np \mid D_{np}$ and $np \in \Scal(\EDS)$, from which we conclude that $(n \to np) \in \Arrow(\EDS)$.
\par\noindent(b)\enspace
Let $d=p_1\cdots p_\ell\in\AliqGen(\EDS)$ be a
generalized aliquot number
for~$\EDS$. We will show that $nd\in\Scal(\EDS)$. First, let $p=p_i$ be one of
the primes dividing~$d$.  
The rank of apparition satisfies $r_{p_i}=p_{i+1}$,
where for notational convenience we let
$p_{\ell+1}=p_1$. Hence $p_i \mid D_{p_{i+1}} \mid D_{nd}$.  
Next let $p$ be a prime dividing $n$.  Then $p \mid n \mid D_n
\mid D_{nd}$.  We have shown that any prime $p$ dividing $nd$
satisfies $p \mid D_{nd}$.  By Proposition \ref{proposition:amiq}, we
conclude that $nd \in \Scal(\EDS)$.
\par
Now we determine the possible values of $\gcd(d,n)$.  If $p_i \mid n$,
then since $n \in \Scal(\EDS)$, Proposition \ref{proposition:amiq}
implies that $p_{i+1} = r_{p_i}$ must divide $n$.  Hence, by the
construction of $d$, we see that either
$n$ is divisible by none of the
prime dividing $d$, or it is divisible by all of them.  Therefore
$\gcd(d,n)= 1$ or $d$.
\par
Suppose now that $\gcd(d,n)=1$ and that $e\mid d$ is a divisor
of~$d$ such that $ne \in \Scal(\EDS)$.
Then by the reasoning of the last paragraph, with $n$  replaced
by $ne$, we find that $\gcd(d,ne)=1$ or $d$. But $\gcd(d,n)=1$
and~$e\mid d$, so we conclude that~$e=1$ or~$e=d$. 
Hence $(n\to nd)\in\Arrow(\EDS)$ by definition.
\par
This completes the proof of~(b).  We also note that some condition
such as $\gcd(n,d)=1$ is necessary. For example, suppose that $(p,q)$
is an amicable pair and that $p$ divides $D_n$. Then there is no arrow
from $n$ to $npq$, because there are ``shorter'' arrows $n\to np\to
npq$.
\par\noindent(c)\enspace
We are given that $n\in\Scal(\EDS)$, $np\in\Scal(\EDS)$, and
$p\notdivide D_n$.  Since $np\in\Scal(\EDS)$, Proposition
\ref{proposition:amiq} implies $p \mid D_{np}$. We observe that
\begin{align*}
  p\nmid D_n
  &\quad\Longleftrightarrow\quad 
  [n]P\not\equiv O\pmod{p}, \\
  p\mid D_{np}
  &\quad\Longleftrightarrow\quad
  [np]P\equiv O\pmod{p}.
\end{align*}
Hence under our assumptions, in particular the regularity assumption,
we see that the point~$[n]P$ has order exactly~$p$ 
in~$P\in E_\ns(\FF_p)$. Hence $p\mid E_\ns(\FF_p)$.
\par\noindent(c-1)\enspace
Suppose first that~$E$ has good reduction at~$p$, so
$E_\ns(\FF_p)=E(\FF_p)$.  We want to show that $p\in\Aliq(\EDS)$.
The assumption that~$\#E(\FF_p)\ne2p$, combined with Hasse's estimate
\text{$|p+1-\#E(\FF_p)|\le2\sqrt{p}$}, implies that
\begin{equation}
  \label{eqn:Hasse}
  p \mid \#E(\FF_p)
  \quad\Longleftrightarrow\quad 
  \#E(\FF_p)=p.
\end{equation}
Since~$r_p\mid\#E(\FF_p)$, we see  that $r_p=1$ or $r_p=p$. 
But $r_p=1$ implies that $p\mid D_1$, contradicting $p\notdivide D_n$.
Therefore $r_p=p$, which implies that $p\in\Aliq(\EDS)$, i.e.,~$(p)$ is
an aliquot cycle of length one.
\par\noindent(c-2)\enspace
Next suppose that~$E$ has bad reduction at~$p$.
It follows from~$p\mid E_\ns(\FF_p)$ that~$E$ has additive reduction
at~$p$. (If it had multiplicative reduction, then~$E_\ns(\FF_p)$ would
contain~$p\pm1$ points, depending on whether the reduction is split
or nonsplit.)
\par\noindent(d)\enspace
We first show that $\gcd(D_n,d)=1$, which in particular implies
that~$\gcd(n,d)=1$, since~$n\mid D_n$. To see this, suppose
to the contrary
that \text{$\gcd(D_n,d)>1$}, and let~$p$ be a prime dividing~$\gcd(D_n,d)$.
Since \text{$p\mid D_n$}, we know from~(a) that~$(n\to np)\in\Arrow(\EDS)$.
But since \text{$p\mid d$}, we have divisibilities \text{$n\mid np\mid nd$},
so the fact that $n\to np$ and $n\to nd$ are arrows implies that
either $n=np$ or $np=nd$. Neither of these is possible, since $p\ge2$,
and~$d$ is composite by assumption. This completes the proof that
$\gcd(D_n,d)=1$.
\par
In order to analyze the arrow~$(n\to nd)$, we associate to the
integer~$d$ a directed graph~$\Gcal_d$ as in the following lemma.
The graph~$\Gcal_d$ classifies the primes dividing each rank
of apparition~$r_p$.

\begin{lemma}
\label{lemma:inoutarrows}
Let~$\EDS$ be a minimal regular EDS,
let $n\ge1$ and $d\ge1$ with $d$ composite, and assume that
\text{$(n\to nd)\in\Arrow(\EDS)$}.  We construct a directed
graph~$\Gcal_d$ with vertices and arrows defined as
follows\textup:
\begin{align*}
  \Vertex(\Gcal_d) &= \{\text{primes $p$ such that $p\mid d$}\},\\
  \Arrow(\Gcal_d) &= \{ p \to q : q\mid d~\text{and}~q \mid r_p \}.
\end{align*}
\textup(N.B., the graph~$\Gcal_d$ is entirely distinct from the graph
on~$\Scal(\EDS)$.\textup) 
\begin{parts}
\Part{(a)}
Every vertex of~$\Gcal_d$ has  an in-arrow.
\Part{(b)}
Every vertex of~$\Gcal_d$ has  an out-arrow.
\Part{(c)}
The graph~$\Gcal_d$ is connected.
\end{parts}
\end{lemma}
\begin{proof}
Let $q\mid d$ be a prime divisor of~$d$, i.e.,~$q$ is a vertex of~$\Gcal_d$.
\par\noindent(a)\enspace
We need to show that~$q$ has an in-arrow.  Let $d' = d/q$.  Since $(n
\rightarrow nd) \in \Arrow(\EDS)$, we know that $nd' \notin
\Scal(\EDS)$.  By Proposition~\ref{proposition:amiq}, this implies the existence
of a prime $p \mid nd'$ satisfying $r_p \nmid nd'$.  Since $r_p \mid
nd$ by Proposition~\ref{proposition:amiq}, this implies that $q \mid r_p$, which
shows $p \rightarrow q$ as required.
\par\noindent(b)\enspace
We need to show that~$q$ has an out-arrow.  Since $q \mid d$, we have
$q \mid D_d$ or $r_q \mid d$.  This shows that some prime $p \mid d$
satisfies $p \mid r_q$, and thus $q \rightarrow p$ as required.
\par\noindent(c)\enspace
Define $d'$ to be the part of $d$ supported on primes
appearing as vertices in a connected component $\Gcal'$ of $\Gcal_d$.
Then for each prime $p \mid d'$, all primes dividing $r_p$ appear in
$\Gcal'$ by connectedness.  Since $r_p \mid nd$ by the assumption that
$nd \in \Scal(\EDS)$, this implies $r_p \mid nd'$.  This shows that
$nd' \in \Scal(\EDS)$, which contradicts $(n \rightarrow nd) \in
\Arrow(\EDS)$ unless $d'=1$ or $d'=d$.  So $\Gcal_d$ is connected.
\end{proof}

\par\noindent 

Suppose first that~$r_p$ is prime for every~$p\mid d$. We need to
prove that \text{$d\in\AliqGen(\EDS)$}.  By definition, for every
arrow $(p\to q)\in\Arrow(\Gcal_d)$ we have $q\mid r_p$, so the
assumption that~$r_p$ is prime implies that $r_p=q$.  In particular,
every vertex in the finite directed graph~$\Gcal_d$ has at most one
outgoing arrow.  But Lemma~\ref{lemma:inoutarrows}(b) tells us that
every vertex in~$\Gcal_d$ has at least one outgoing arrow, 
and Lemma~\ref{lemma:inoutarrows}(c) says that the graph is connected.
It follows that~$\Gcal_d$ consists of a single loop,
\par\noindent
\begin{picture}(200,50)(28,0)
\put(150,30){\mbox{
  $p_1 \to p_2 \to p_3 \to \cdots \to p_{t}$}}
\qbezier(170,20)(218,0)(268,20)
\put(170,20){\vector(-3,1){10}}
\end{picture}
\par\noindent
This loop satisfies $r_{p_i}=p_{i+1}$, so by definition
$(p_1,\ldots,p_t)$ is a generalized aliquot cycle for~$\EDS$, and
hence~$p_1p_2\cdots p_t\in\AliqGen(\EDS)$. Since we also know
that~$\gcd(d,n)=1$, it follows from part~(c) of the theorem that
\[
  (n\to np_1p_2\cdots p_t)\in\Arrow(\EDS).
\]
But $np_1p_2\cdots p_t\mid nd$, so that fact that $(n\to nd)\in\Arrow(\EDS)$
implies that $d=p_1p_2\cdots p_t$. Hence~$d\in\AliqGen(\EDS)$. This completes
the proof of part~(i).
\par
In order to analyze the case that one or more of the~$r_p$ are composite,
for each vertex $q\in\Gcal_d$ we let
\[
  \In(q) = \#\bigl\{p\mid d : (p\to q)\in\Arrow(\Gcal_d)\bigr\}
\]
denote the in-degree of~$q$, i.e., the number of arrows pointing in
to~$q$; and similarly~$\Out(q)$ will denote the out-degree of~$q$.
Lemma~\ref{lemma:inoutarrows} tells us that~$\In(q)\ge1$ for
all~$q\in\Gcal_d$.
For each~$p\in\Gcal_d$ we know that~$r_p$ is divisible by 
the primes at the tips of
the outgoing arrows from~$p$, so we can factor~$r_p$ as
\[
  r_p = \Bigl(\prod_{(p\to q)\in\Arrow(\Gcal_d)} q\Bigr)M_p
  \qquad\text{for some $M_p\ge1$.}
\]
Further, from Proposition~\ref{proposition:amiq}, the fact
that~$nd\in\Scal(\EDS)$ and~$p\mid d$ implies that~$r_p\mid nd$,
so every prime divisor of~$M_p$ is also a prime divisor of~$nd$.
\par
We now multiply over all~$p\in\Gcal_d$, i.e., over all~$p\mid d$, and
rearrange the terms to deduce that
\[
  \prod_{p\mid d} r_p
  = \prod_{p\mid d} \biggl(
        \prod_{\substack{\text{$q\mid d$ such that}\\(p\to q)\in\Arrow(\Gcal_d)\\}} q
    \biggr)M_p
  = \Bigl(\prod_{q\mid d} q^{\In(q)}\Bigr)
     \Bigl(\prod_{p\mid d} M_p\Bigr).
\]
Since $\In(q)\ge1$ for every~$q\mid d$, we can rewrite this as
\begin{equation}
  \label{eqn:pdrppqIn}
  \prod_{p\mid d} \frac{r_p}{p}
  =  \Bigl(\prod_{q\mid d} q^{\In(q)-1}\Bigr)
     \Bigl(\prod_{p\mid d} M_p\Bigr),
\end{equation}
where the right-hand side is a positive integer.  Using the
Hasse--Weil bound $(\sqrt{p}+1)^2\ge \#E_\ns(\FF_p)$ and the fact that
$r_p\mid\#E_\ns(\FF_p)$, we obtain the useful inequalities
\begin{equation}
  \label{eqn:pdrppqIn2}
  \prod_{p\mid d}\left(1+\frac{1}{\sqrt{p}}\right)^2
  \ge \prod_{p\mid d} \frac{\#E_\ns(\FF_p)}{p}
  \ge  \Bigl(\prod_{q\mid d} q^{\In(q)-1}\Bigr)
     \Bigl(\prod_{p\mid d} M_p\Bigr).
\end{equation}
\par
We now use~\eqref{eqn:pdrppqIn2} to derive a bound that depends on the
number of composite~$r_p$. (See also Remark~\ref{remark:precbds}.)
Let~$p_0$ be the smallest prime divisor of~$nd$. Then
\begin{align}
  \label{eqn:invsout}
  \smash[b]{
    \prod_{q\mid d} q^{\In(q)-1}
    \ge \prod_{q\mid d} p_0^{\In(q)-1}
  }
  &= p_0^{\sum_{q\mid d} (\In(q)-1)} \notag \\
  &= p_0^{\#\Arrow(\Gcal_d)-\#\Vertex(\Gcal_d)} \notag \\
  &= p_0^{\sum_{p\mid d} (\Out(p)-1)}.
\end{align}
Now consider a prime~$p\in\Gcal_d$ such that~$r_p$ is composite.
If~$r_p$ is divisible by two or more primes that also divide~$d$,
then~$\Out(p)\ge2$, so we get a factor of~$p_0$
in~\eqref{eqn:invsout}.  On the other hand, if there is some~$q\mid
r_p$ with~$q\nmid d$, then~$q\mid M_p$, so we get a factor of~$q$
in~\eqref{eqn:pdrppqIn2}.  Further, we must have~$q\mid n$, since as
noted earlier, $r_p\mid nd$.  Thus $q\ge p_0$. This proves that every
composite~$r_p$ with~$p\mid d$ contributes a factor
to~\eqref{eqn:pdrppqIn2} that is greater than or equal to~$p_0$.
Hence the lower bound in~\eqref{eqn:pdrppqIn2} is at least~$p_0^t$,
where~$t$ is the number of~$p\mid d$ such that~$r_p$ is composite.
\end{proof}

The following corollary may be compared with Smyth's
result~\cite[Corollary~2]{Smyth} for Lucas sequences.

\begin{corollary}
Let~$\EDS$ be a minimal regular EDS, let $n\in\Scal(\EDS)$, and
let~$m$ be an integer of the form
\[
  m=p_1p_2\cdots p_s\cdot d_1d_2\cdots d_t,
\]
where the primes~$p_i$  and integers~$d_i$ satisfy
\[
  \text{$p_i\mid D_n$ \quad and\quad $d_i\in\AliqGen(\EDS)$.}
\]
Then $nm\in\Scal(\EDS)$.
\end{corollary}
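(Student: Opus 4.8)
The plan is to induct on the total number of factors $s+t$, peeling off one factor at a time and invoking the two arrow-building operations of Theorem~\ref{theorem:buildarrows}. When $s+t=0$ we have $m=1$ and $nm=n\in\Scal(\EDS)$ by hypothesis, which is the base case. For the inductive step, write $m=m'f$, where $f$ is one of the primes $p_i$ or one of the aliquot numbers $d_j$; after relabelling, the remaining factors of $m'$ still have the prescribed form (the surviving $p_i$ still divide $D_n$ and the surviving $d_j$ are still in $\AliqGen(\EDS)$), so the inductive hypothesis gives $nm'\in\Scal(\EDS)$.

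Next I would split into two cases. If $f=p_i$ for some $i$, then since $\EDS$ is a divisibility sequence and $n\mid nm'$ we have $D_n\mid D_{nm'}$, so $p_i\mid D_n\mid D_{nm'}$; applying Theorem~\ref{theorem:buildarrows}(a) with index $nm'$ yields $(nm'\to nm'p_i)\in\Arrow(\EDS)$, and in particular $nm=nm'p_i\in\Scal(\EDS)$. If instead $f=d_j$ for some $j$, then $nm'\in\Scal(\EDS)$ and $d_j\in\AliqGen(\EDS)$, and Theorem~\ref{theorem:buildarrows}(b) gives $nm=nm'd_j\in\Scal(\EDS)$ directly; here the coprimality hypothesis in part~(b) is needed only to conclude that there is an \emph{arrow}, not for membership in $\Scal(\EDS)$. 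Either way $nm\in\Scal(\EDS)$, completing the induction.

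A non-inductive proof is also available, which might be cleaner to write out: by Proposition~\ref{proposition:amiq} it suffices to check that every prime $q\mid nm$ divides $D_{nm}$. If $q\mid n$ this is immediate from $n\mid D_n\mid D_{nm}$; if $q=p_i$ then $q\mid D_n\mid D_{nm}$; and if $q\mid d_j$, writing the generalized aliquot cycle underlying $d_j$ so that $q$ is one of its primes, the successor $r_q$ in that cycle also divides $d_j$, hence $r_q\mid nm$, and then $q\mid D_{nm}$ by Lemma~\ref{lemma:nDmrnmmPOn}. I expect no genuine obstacle in either route; the only point requiring a word of care is that, when a prime factor $p_i$ is peeled off, one must verify $p_i\mid D_{nm'}$ and not merely $p_i\mid D_n$, which follows at once from the divisibility-sequence property. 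In essence the corollary just records that $\Scal(\EDS)$ is closed under iterating the two operations of Theorem~\ref{theorem:buildarrows}.
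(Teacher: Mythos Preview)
Your inductive argument is correct and is exactly the approach the paper indicates (``immediate from Theorem~\ref{theorem:buildarrows}(a,b) and induction on the number of factors of~$m$''), with the details filled in carefully, including the point that $p_i\mid D_n\mid D_{nm'}$ via the divisibility-sequence property and that part~(b) yields $nm'd_j\in\Scal(\EDS)$ without any coprimality hypothesis. Your alternative direct argument via Proposition~\ref{proposition:amiq} is also valid and arguably cleaner, though the paper does not present it.
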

\begin{proof}
This is immediate from Theorem~\ref{theorem:buildarrows}(a,b)
and induction on the number of factors of~$m$.
\end{proof}

\section{Remarks on Arrow Construction}
\label{section:arrowremarks}

In this section we make a number of remarks concerning the existence
of index divisibility arrows as described in
Theorem~\ref{theorem:buildarrows}, and we give examples of
non-standard arrows as per Theorem~\ref{theorem:buildarrows}(d-ii).
We assume throughout that our EDS is minimal and regular.

\begin{remark}
\label{remark:nonstandardarrows}
Given an element~$n\in\Scal(\EDS)$, Theorem~\ref{theorem:buildarrows}
gives two ``standard'' ways to create arrows $(n\to
nd)\in\Arrow(\EDS)$.  First, Theorem~\ref{theorem:buildarrows}(a)
gives an arrow $(n\to np)$ for each prime $p\mid D_n$. Second,
Theorem~\ref{theorem:buildarrows}(b) gives an arrow for each aliquot
number~$d\in\AliqGen(\EDS)$ that is prime to~$n$.  Conversely,
Theorem~\ref{theorem:buildarrows}(d) implies that any ``non-standard''
arrow satisfies
\begin{equation}
  \label{eqn:pd11psqrt2}
  \prod_{p\mid d} \left(1+\frac{1}{\sqrt{p}}\right) \ge \sqrt{2}.
\end{equation}
In particular, writing~$\nu(d)$ for the number of distinct prime
divisors of~$d$ and $p_{\min}(d)$ for the smallest prime dividing~$d$, 
we have
\[
  \nu(d) \ge \frac{\frac12\log2}{\log(1+p_{\min}(d)^{-1/2})}
  = \frac{\log2}2\sqrt{p_{\min}(d)} + O(1).
\]
Thus if the smallest prime divisor of~$d$ is large, then~$\nu(d)$ will
be large, and~$d$ will be enormous.  The following brief table
uses~\eqref{eqn:pd11psqrt2} to give the smallest values of~$\nu(d)$
and~$d$ for various values of~$p_{\min}(d)$.
\[
  \begin{array}{|c||*{5}{c|}} \hline
    p_{\min}(d) \ge{} & 10 & 10^2 & 10^3 & 10^4 & 10^5 \\ \hline
    \nu(d) \ge{} & 2 & 4 & 12 & 36 & 100 \\ \hline
    d  \ge{}& 143 & 1.21\cdot 10^{8} & 1.56\cdot 10^{36} 
      & 1.80\cdot 10^{144} & 1.85\cdot 10^{500} \\ \hline
  \end{array}
\]
And if Theorem~\ref{theorem:buildarrows}(d) gives a lower bound
for~\eqref{eqn:pd11psqrt2} that is larger than~$\sqrt{2}$, then the
lower bounds for~$\nu(d)$ and~$d$ in terms of~$p_{\min}(d)$ will be
even larger.
\end{remark}

\begin{remark}
\label{remark:precbds}
The formulas~\eqref{eqn:pdrppqIn} and~\eqref{eqn:pdrppqIn2} derived
during the course of proving Theorem~\ref{theorem:buildarrows}(d)
impose stringent conditions on the allowable values of~$d$.  We used
these formulas to derive a general lower bound, but when analyzing a
specific~EDS, it is probably best to use them directly.  We also note,
although we will not prove, that~\eqref{eqn:pdrppqIn} is true even
if~$d$ is divisible by primes for which~$P$ has singular reduction.
Similarly, the following version of~\eqref{eqn:pdrppqIn2} is true in
general:
\[
   \prod_{p\mid d} \frac{\#\Neron(\FF_p)}{p}
  \ge  \Bigl(\prod_{q\mid d} q^{\In(q)-1}\Bigr)
     \Bigl(\prod_{p\mid d} M_p\Bigr),
\]
where $\Neron$ is the N\'eron model of~$E$.  Note that if~$E$ has bad
reduction, then $\#\Neron(\FF_p)=c_p\#E_\ns(\FF_p)$, where~$c_p$ is
the number of components in the special fiber above~$p$. In
particular,~$c_p\le4$ unless the reduction is split multiplicative, in
which case $c_p=\ord_p(\Disc(E))$.
\end{remark}

\begin{example}
\label{example:N37a}
Continuing with the EDS associated to the elliptic curve and point
from Example~\ref{example:N37curve}, we have
\[
  \Disc(E)=37,\quad
  \#E(\FF_2) = 5,\quad
  \#E(\FF_3) = 7,\quad
  \#E(\FF_5) = 8.
\]
In particular,~$E$ has multiplicative reduction at~$37$ and good
reduction elsewhere, the point~$P$ is in~$E_\ns(\FF_{37})$, and
$\#E(\FF_p)\ne2p$ for all primes~$p$.  Further, since $D_5=2$ and
$D_{10}=4$, we see that Lemma~\ref{lemma:formalgp}(b) is true even
for~$p=2$ and all values of~$n$ and~$k$, so we can treat~$2$ as we do
all other primes.
\par
We claim that for all primes~$p$,
\[
  p\mid D_n\quad\text{or}\quad p\in\Aliq(\EDS)
  \quad\Longleftrightarrow\quad (n\to np)\in\Arrow(\EDS).
\]
The implication $\Rightarrow$ follows directly from
Theorem~\ref{theorem:buildarrows}(a,c).  Conversely, if~$(n\to
np)\in\Arrow(\EDS)$, then either~$p\mid D_n$, or else
Theorem~\ref{theorem:buildarrows}(c) tells us that~$p\in\Aliq(\EDS)$.
We thus have a precise description of the arrows of prime weight.
\par
Theorem~\ref{theorem:buildarrows}(d) says that arrows $(n\to nd)$ of
composite weight with $d\notin\AliqGen(\EDS)$ have~$d$ values that are
either divisible by small primes or are huge. Further, examining the
proof of Theorem~\ref{theorem:buildarrows}(d) shows that the prime
divisors of such~$d$ must satisfy some fairly stringent conditions. We
suspect that for this example there are no such arrows, i.e., 
\[
  \left(\begin{tabular}{@{}c@{}}
    $d$ is prime\\ and $d\mid D_n$\\
  \end{tabular}\right)
  \quad\text{or}\quad d\in\AliqGen(\EDS)
  \quad\stackrel{?}{\Longleftrightarrow}\quad (n\to nd)\in\Arrow(\EDS).
\]
\end{example}

\begin{example}
\label{example:nonstandardarrow1}
The following example shows that ``non-standard'' arrows exist
(cf.\ Remark~\ref{remark:nonstandardarrows}).  Let~$\EDS$ be the EDS
associated to
\[
  E: y^2 + 2xy + y = x^3 + x^2 + 7x + 4
  \quad\text{and}\quad
  P = (4,7).
\]
The curve~$E$ is nonsingular at~$2$,~$3$, and~$5$, and
\[
  \#E(\FF_2) = 3,\qquad
  \#E(\FF_3) = 5,\qquad
  \#E(\FF_5) = 6.
\]
Further, the point~$P$ has exact order~$6$ in~$E(\FF_5)$.  Thus
$r_2=3$, $r_3=5$, and $r_5=6$, so 
\begin{equation}
  \label{eqn:23561015notinS}
  2,3,5,6,10,15\notin\Scal(\EDS) \qquad\text{and}\qquad
  1,30\in\Scal(\EDS).
\end{equation}
Alternatively, we can verify~\eqref{eqn:23561015notinS} directly by
explicitly computing the relevant terms of~$\EDS$,
\begin{align*}
  D_{1} \bmod 1 &= 0, &
  D_{2} \bmod 2 &= 1, &
  D_{3} \bmod 3 &= 2, &
  D_{5} \bmod 5 &= 4, \\
  D_{6} \bmod 6 &= 4, &
  D_{10} \bmod 10 &= 3, &
  D_{15} \bmod 15 &= 3, &
  D_{30} \bmod 30 &= 0.
\end{align*}
It follows from the definition of the directed graph~$\Scal(\EDS)$ that
$(1\to30)\in\Arrow(\EDS)$.  However, since $r_5=6$ is not prime, we
have $30\notin\AliqGen(\EDS)$.  Thus the arrow $(1\to30)$ is not
predicted by Theorem~\ref{theorem:buildarrows}(d-i). This does not
contradict the theorem, of course, since 
\[
  \frac{\#E(\FF_2)}{2}\cdot\frac{\#E(\FF_3)}{3}\cdot\frac{\#E(\FF_5)}{5}
  = \frac{3}{2}\cdot\frac{5}{3}\cdot\frac{6}{5} = 3,
\]
so condition~\eqref{eqn:minpigt212l12x} is satisfied and we are in the
situation of Theorem~\ref{theorem:buildarrows}(d-ii).
\end{example}

\begin{remark}
\label{remark:CRTarrows}
Generalizing Example~\ref{example:nonstandardarrow1}, we sketch how 
to construct EDS having non-standard arrows with arbitrarily large
values of~$d$.  The proof of Theorem~\ref{theorem:buildarrows}(d)
suggests the method.  We start with primes~$p_1,\ldots,p_N$ and
integers $n_1,\ldots,n_N$ and $k_1,\ldots,k_N$ satisfying
\[
  \left| p_i + 1 - k_in_i \right| < 2\sqrt{p_i}.
\]
Our goal is to find an elliptic curve~$E/\QQ$ and point~$P\in E(\QQ)$
such that $\#E(\FF_{p_i})=k_in_i$ and $r_{p_i}=n_i$ for all $1\le i\le N$.
\par
A theorem of Deuring \cite{Deuring} says that there exists an
elliptic curve $E_i/\FF_{p_i}$ satisfying
\[
  \#E_i(\FF_{p_i})=k_in_i,
\]
and a result of R\"uck~\cite[Theorem 3]{Ruck} says that we can choose~$E_i$
so that the group structure of $E_i(\FF_{p_i})$ ensures the existence
of a point~$P_i \in E_i(\FF_p)$ of order~$n_i$.
Making a change of coordinates, we may assume that $P_i = (0,0)$.
\par
Next we apply the Chinese remainder theorem to the coefficients of the
Weierstrass equations of $E_1,\ldots,E_n$. This gives an elliptic
curve $E/\QQ$ with $(0,0)\in E(\QQ)$ that satisfies
\[
  E \bmod p_i \cong E_i, \quad 1 \le i \le N.
\]
If the Weierstrass equation for~$E$ is not globally minimal, then we
can change coordinates to make it minimal without affecting the
reduction at~$p_1,\ldots,p_N$, since they are primes of good
reduction. For simplicity, we will assume that some~$n_i$ is divisible
by a prime greater than~$7$, since then Mazur's Theorem
\cite[VIII.7.5]{AEC} ensures that~$(0,0)$ is not a torsion point.  We
may thus associate to~$E$ and~$P$ an elliptic divisibility
sequence $\EDS = (D_n)_{n\ge1}$ satisfying
\[
   r_{p_i} = n_i \quad\text{for all $1 \le i \le N$.}
\]
\par
Finally, we observe that arbitrarily large non-standard arrows can be
constructed in this way. We begin with any prime $p_1$, we let
\[
  p_1, p_2, \ldots, p_N
\]
be a list of consecutive primes, and we set
\[
  d = p_1^2 p_2 p_3 \cdots p_N.
\]
We then find a curve and point whose associated EDS satisfies
\[
  r_{p_{i}} = p_{i+1}, \quad 1 \le i \le N-1, \quad\text{and}\quad r_{p_{N}} = p_1^2.
\]
If the list of primes is taken to be long enough, then the final
condition~$r_{p_N} = p_1^2$ is allowed by Hasse's bound, and we can
proceed as in the description above to find a sequence~$\EDS =
(D_n)_{n \ge 1}$ with  $(1 \to d)\in\Arrow(\EDS)$.
\end{remark}

\begin{example}
We use the method described in Remark~\ref{remark:CRTarrows} to
construct a non-standard arrow $(1\to d)$ for the moderately large
integer
\[
d = 5^2 \cdot 7\cdot 11 \cdot 17 = 32725.
\]
We want to construct an elliptic curve $E/\QQ$ and point $P \in
E(\QQ)$ satisfying
\begin{equation}
\label{eqn:rd}
  r_{5} = 7, \quad
  r_{7} = 11, \quad
  r_{11} = 17, \quad\text{and}\quad
  r_{17} = 25.
\end{equation}
Then the associated sequence $\EDS = (D_n)_{n\ge1}$ will have $(1 \to
d) \in \Arrow(\EDS)$, according to Proposition~\ref{proposition:amiq}.
\par
To do this, we first found elliptic curves $E_5/\FF_5$, $E_7/\FF_7$,
$E_{11}/\FF_{11}$ and $E_{17}/\FF_{17}$ satisfying
\[
  \# E_5(\FF_{5}) = 7, \quad
  \# E_7(\FF_{7}) = 11, \quad
  \# E_{11}(\FF_{11}) = 17,\quad
  \# E_{17}(\FF_{17}) = 25.
\]
This is possible because the Hasse bound is satisfied in each
instance.  We then used the Chinese remainder theorem to find an
elliptic curve $E$ with minimal Weierstrass equation
\[
  y^2 + y = x^3 + x^2 - 1291874622406186x + 17872226251073822113702,
\]
and point
\[
  P = (20751503, 1073344).
\]
(We've moved~$P$ away from~$(0,0)$ to make the numbers a bit smaller.)
The associated sequence $\EDS = (D_n)_{n \ge 1}$ begins
\begin{multline*}
  1,\enspace 2146689,\enspace 286883381041833542301,\\
  60768120452650698495048133538894517, \ldots
\end{multline*}
By construction, $(1 \to 32725) \in \Arrow(\EDS)$.  Of course, the
$32725^{\text{th}}$ term is too large to print, but the claim can be verified
by computation modulo $32725$.

For this example, we can verify equation \eqref{eqn:minpigt212l12x} in
Theorem~\ref{theorem:buildarrows}(d), which states
\begin{equation}
  \prod_{p\mid d} \left(1+\frac{1}{\sqrt{p}}\right)^2
  \ge p_0^t.
\end{equation}
In our case, $p_0=5$, $t=1$, and the left-hand side exceeds $10$.
\end{example}

\begin{remark}
\label{remark:singularEDS}
In the definition of EDS, the elliptic curve may be replaced with a
singular cubic curve as long as~$P$ is a non-singular point, since
$E_\ns(\QQ)$ is a group.  More precisely,~$E_\ns(\QQ)$ is either the
additive group~$\QQ^+$, the multiplicative group~$\QQ^*$, or a
subgroup of a quadratic twist of the multiplicative
group; see~\cite[III.2.5, Exercise~3.5]{AEC}.  Thus EDS on singular
elliptic curves are closely related to Lucas sequences.

For example, consider the nodal singular cubic curve and point
\[
C: y^2 + 3xy + 3y = x^3 + 2x^2 + x,
  \quad\text{and}\quad
P = (0,0).
\]
The associated EDS,
\[
 \EDS : 1, 3, 8, 21, 55, 144, 377, 987, 2584, 6765, \ldots
\]
consists of the even-indexed Fibonacci numbers. This is exactly the
Lucas sequence generated by
\[
  L_{n+2} = 3L_{n+1} - L_n,\qquad L_0=0,\quad L_1=1.
\]
The index divisibility set of~$\EDS$ is
\[
  \Scal(\EDS) = \{ 1, 5, 6, 12, 18, 24, 25, 30, 36, 48, 54, 55, 60,
  72, 84, \ldots \}.
\]
In the notation of Smyth's Theorem~\ref{theorem:smyth}, we have
\[
  a=3,\quad b=1,\quad \D=5,\quad\text{and}\quad
 \Bcal_{3,1} = \{ 1 \to 6 \}.
\]
In the language of our paper, $5,6 \in \AliqGen(\EDS)$, 
since
\[
  r_2=3,\quad r_3=2,\quad\text{and}\quad r_5=5.
\]
Thus~$(2,3)$ and~$(5)$ are generalized aliquot cycles.
Notice that the curve $C$ reduces modulo $p$ to a curve having $p$, $p-1$
or $p+1$ non-singular points according as $p$ ramifies, splits, or is
inert in~$\QQ(\sqrt{5})$. 
\par
In general, our Theorem~\ref{theorem:buildarrows} and
Smyth's Theorem~\ref{theorem:smyth} can probably be combined
into a general theorem on (possibly singular) cubic curves.  Notice
that Smyth's set $\Bcal_{a,b}$ may include non-standard arrows in the
case of the multiplicative group, although the analysis is simpler
because $\#C_\ns(\FF_p) \in \{p,p+1,p-1\}$. The primes~$p$ dividing
\text{$\D=a^2-4b$} are the primes for which the group underlying the
Lucas sequence reduces to the additive group~$\FF_p^+$. They are thus
analogous to the primes of additive reduction whose arrows $(n\to np)$
are described in Theorem~\ref{theorem:buildarrows}(a,c).  We also note
that in the multiplicative group case we never have $r_2=4$, so we are
always in the $2$-regular setting.
\end{remark}

\section{Elliptic aliquot cycles}
\label{section:aliquotexamples}
Let $\EDS$ be an EDS with associated elliptic curve and point $(E,P)$,
and let $(p,q)\in\Aliq(\EDS)$ be an amicable pair for $\EDS$. Then the
point~$P$ has order~$q$ modulo~$p$, and~$P$ has order~$p$
modulo~$q$. This implies that
\[
  q\mid\#E(\FF_p)\qquad\text{and}\qquad p\mid\#E(\FF_q).
\]
Conversely, if we are given~$\EDS$ and~$(E,P)$, and if~$p$ and~$q$ are
distinct primes of good reduction satisfying
\begin{equation}
  \label{eqn:amicableE}
  \#E(\FF_p)=q\qquad\text{and}\qquad\#E(\FF_q)=p,
\end{equation}
then~$(p,q)$ is automatically an amicable pair for~$\EDS$.
\par
We note that the conditions~\eqref{eqn:amicableE} do not refer to the
point~$P$. This leads to the following definitions.

\begin{definition}
Let $E/\QQ$ be an elliptic curve.  
An \emph{aliquot cycle of length~$\ell$}
for~$E/\QQ$ is a sequence $(p_1,p_2,\ldots,p_\ell)$ of distinct primes
such that~$E$ has good reduction at every~$p_i$ and
\[
  \#E(\FF_{p_1})=p_2,\semiquad   \#E(\FF_{p_2})=p_3,\semiquad\ldots,\semiquad
  \#E(\FF_{p_{\ell-1}})=p_\ell,\semiquad   \#E(\FF_{p_\ell})=p_1.
\]
An \emph{amicable pair} for~$E/\QQ$ is an aliquot cycle of length~$2$.
\end{definition}

\begin{remark}
The distribution of amicable pairs and aliquot cycles on elliptic
curves is studied in~\cite{SilvStangEAS}. In particular, it turns out
that elliptic curves with complex multiplication behave quite
differently from curves without CM.  For the convenience of
the reader, we briefly summarize
some of the material in~\cite{SilvStangEAS}.
\begin{parts}
\item[\textbullet]
If~$E(\QQ)$ contains a non-trivial torsion point, then~$E$ has
(essentially) no aliquot cycles. This is clear since
$E(\QQ)_\tors\hookrightarrow E(\FF_p)$ for all primes
\text{$p\notdivide2\Disc_{E/\QQ}$}; cf.\ \cite[Remark~5]{SilvStangEAS}.
\item[\textbullet]
For any~$\ell$, there exists an elliptic curve $E/\QQ$ that has an
aliquot cycle of length~$\ell$. More generally, for any
$\ell_1,\ldots,\ell_s$ there exists an elliptic curve having disjoint
aliquot cycles of
length~$\ell_1,\dots,\ell_s$~\cite[Theorem~13]{SilvStangEAS}.
\item[\textbullet]
Let $E/\QQ$ be an elliptic curve with complex multiplication
and $j(E)\ne0$. Then $E$ has no  aliquot cycles of length $\ell\ge3$
composed of primes $p\ge5$~\cite[Corollary~16]{SilvStangEAS}.
\item[\textbullet]
Let $E/\QQ$ be an elliptic curve with $j(E)=0$. Then $E$ has 
no aliquot cycles of length~$3$ composed of primes
$p\ge11$~\cite[Proposition~48]{SilvStangEAS}.
\item[\textbullet]
\emph{Conjecture}: Assume that there are infinitely many primes~$p$ such
that $\#E(\FF_p)$ is prime. If~$E$ does not have~CM, then
\[
  \#\{\text{aliquot cycles $(p_1,\ldots,p_\ell)$ with $p_i\le X$}\}
  \gg\ll \frac{\sqrt{X}}{(\log X)^\ell}.
\]
If~$E$ has~CM, then there is a constant~$C_E>0$ such that
\[
  \#\{\text{amicable pairs $(p,q)$ with $p,q\le X$}\}
  \sim C_E\frac{X}{(\log X)^2}.
\]
\end{parts}
\end{remark}

The next proposition shows that aliquot cycles for an elliptic
divisibility sequence are closely related to aliquot cycles on
the associated elliptic curve.

\begin{proposition} 
\label{proposition:aliqisaliq}
Let~$\EDS$ be a minimal EDS, and let~$(E,P)$ be the associated elliptic
curve~$E/\QQ$ and point~$P\in E(\QQ)$.  
\begin{parts}
\Part{(a)}
Let $(p_1,\ldots,p_\ell)$ be an aliquot cycle for~$E/\QQ$ such that
$p_i\notdivide D_1$ for all~$i$.  Then $(p_1,\ldots,p_\ell)$ is an
aliquot cycle for~$\EDS$.
\Part{(b)}
Let $(p_1,\ldots,p_\ell)$ be an aliquot cycle for~$\EDS$.
Then
\begin{equation}
  \label{eqn:ppmdfEFp2}
  \prod_{i=1}^\ell \frac{\#E(\FF_{p_i})}{p_i} < 2
  \implies
  \left(\begin{tabular}{@{}l@{}}
    $(p_1,\ldots,p_\ell)$ is an\\ aliquot cycle for~$E$\\
    \end{tabular}
  \right).
\end{equation}
In particular, 
\begin{equation}
  \label{eqn:ppmdfEFp2x}
  \min_{1\le i\le \ell} p_i > \frac{1}{(2^{1/2\ell}-1)^{2}}
  \implies
  \left(\begin{tabular}{@{}l@{}}
    $(p_1,\ldots,p_\ell)$ is an\\ aliquot cycle for~$E$\\
    \end{tabular}
  \right),
\end{equation}
cf.\ Theorem~\textup{\ref{theorem:buildarrows}(d)}.  
\end{parts}
\end{proposition}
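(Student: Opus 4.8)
The plan is to deduce everything from the single fact that the rank of apparition $r_p(\EDS)$ equals the order of the reduced point $P$ in $E(\FF_p)$, and hence divides $\#E(\FF_p)$ for every prime $p$ of good reduction; this is Lemma~\ref{lemma:weilbd}. For part~(a), the hypothesis that $(p_1,\dots,p_\ell)$ is an aliquot cycle for $E/\QQ$ says that the $p_i$ are distinct primes of good reduction with $\#E(\FF_{p_i})=p_{i+1}$ (indices read cyclically, $p_{\ell+1}=p_1$). Then $r_{p_i}\mid\#E(\FF_{p_i})=p_{i+1}$, so $r_{p_i}\in\{1,p_{i+1}\}$; the value $1$ is excluded because it would give $p_i\mid D_1$, so $r_{p_i}=p_{i+1}$ for all $i$. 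Since the $p_i$ are distinct primes of good reduction, this is precisely the definition of an aliquot cycle for $\EDS$.

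For part~(b), the hypothesis now reads $r_{p_i}=p_{i+1}$, so Lemma~\ref{lemma:weilbd} gives $p_{i+1}\mid\#E(\FF_{p_i})$; write $\#E(\FF_{p_i})=p_{i+1}m_i$ with $m_i$ a positive integer. The crucial step is to multiply over the whole cycle: because $i\mapsto i+1$ is a cyclic permutation of $\{1,\dots,\ell\}$ we have $\prod_i p_{i+1}=\prod_i p_i$, and hence
\[
  \prod_{i=1}^{\ell}\frac{\#E(\FF_{p_i})}{p_i}=\prod_{i=1}^{\ell}m_i
\]
is always a positive integer. The hypothesis that this product is $<2$ therefore forces it to equal $1$, whence $m_i=1$ for every $i$, i.e.\ $\#E(\FF_{p_i})=p_{i+1}$, which is the assertion that $(p_1,\dots,p_\ell)$ is an aliquot cycle for $E$.

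For the final ``in particular'' statement one feeds the Hasse--Weil bound into the same product: $\prod_i\#E(\FF_{p_i})/p_i\le\prod_i(1+p_i^{-1/2})^2$, and the assumption $\min_i p_i>(2^{1/2\ell}-1)^{-2}$ gives $1+p_i^{-1/2}\le 1+(\min_i p_i)^{-1/2}<2^{1/2\ell}$ for each $i$, so $\prod_i(1+p_i^{-1/2})^2<\bigl(2^{1/2\ell}\bigr)^{2\ell}=2$ and part~(b) applies.

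The main obstacle---really the only nontrivial idea---is the cyclic telescoping in~(b). Term by term one need not have $\#E(\FF_{p_i})=p_{i+1}$: the local ratios $m_i=\#E(\FF_{p_i})/p_{i+1}$ can genuinely exceed $1$ (compare the non-standard behaviour quantified in Theorem~\ref{theorem:buildarrows}(d)). What saves the argument is that the product $\prod_i m_i$ around the cycle is an honest positive integer, so any bound strictly below $2$ collapses all of the $m_i$ simultaneously; everything else is a direct application of Lemma~\ref{lemma:weilbd} together with the Hasse--Weil estimate.
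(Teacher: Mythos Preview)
Your proof is correct and follows essentially the same route as the paper's: for (a) you use that $r_{p_i}\mid\#E(\FF_{p_i})=p_{i+1}$ and rule out $r_{p_i}=1$ via $p_i\nmid D_1$; for (b) you write $\#E(\FF_{p_i})=p_{i+1}m_i$, telescope the cyclic product to get $\prod_i \#E(\FF_{p_i})/p_i=\prod_i m_i\in\ZZ_{\ge1}$, and conclude from the bound $<2$; the ``in particular'' is the same Hasse--Weil estimate. This matches the paper's argument almost line for line.
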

\begin{proof}
(a)
If $(p_1,p_2,\ldots,p_\ell)$ is an aliquot cycle for~$E/\QQ$, then for
all~$i$ we know that $\#E(\FF_{p_i})=p_{i+1}$ is prime.
Since~$p_{i+1}\notdivide D_1$, the order of the point~$P$ in
$E(\FF_{p_i})$ must equal~$p_{i+1}$.  Therefore
$r_{p_i}(\EDS)=p_{i+1}$, so the cycle is aliquot for~$\EDS$.
\par\noindent(b)\enspace
The proof is similar to the proof of
Theorem~\ref{theorem:buildarrows}(d).
We are given that
$r_{p_i}(\EDS)=p_{i+1}$ for all~$i$, or equivalently, the point~$P$
has order~$p_{i+1}$ in the group~$E(\FF_{p_i})$. Thus for every $1\le
i\le \ell$ we have
\[
  \#E(F_{p_i}) = p_{i+1}M_{p_i}
  \quad\text{for some $M_{p_i}\ge1$.}
\]
Multiplying for $1\le i\le\ell$ and dividing by~${p_1\cdots p_\ell}$
yields
\[
  \prod_{i=1}^\ell \frac{\#E(\FF_{p_i})}{p_i} 
  = \prod_{i=1}^\ell M_i.
\]
Thus the assumption that $\prod_{i}\#E(\FF_{p_i})/p_i<2$ implies
that~$M_i=1$ for every~$i$, so $(p_1,\ldots,p_\ell)$ is an aliquot
cycle for~$E$. This proves~\eqref{eqn:ppmdfEFp2}.
\par
To prove~\eqref{eqn:ppmdfEFp2x}, we use the Hasse--Weil bound
${\#E(\FF_p)}\le(\sqrt{p}+1)^2$ to obtain
\[
  \prod_{i=1}^\ell \frac{\#E(\FF_{p_i})}{p_i} 
  \le \prod_{i=1}^\ell \left(1 + \frac{1}{\sqrt{p_i}}\right)^2
  \le \left(1 + \frac{1}{\min_i\sqrt{p_i}}\right)^{2\ell}.
\]
Now a little bit of algebra, combined with~\eqref{eqn:ppmdfEFp2}
yields~\eqref{eqn:ppmdfEFp2x}.
\end{proof}

\section{Miscellaneous Remarks}
\label{section:miscremarks}

We conclude with two brief remarks.

\begin{remark}
Recall that a sequence~$\Aseq=(A_n)_{n\ge1}$ is called a
\emph{divisibility sequence} if
\[
  m\mid n \Longrightarrow A_m \mid A_n.
\]
Examples of divisibility sequences include Lucas sequences of the
first kind, the odd terms of Lucas sequences of the second kind, and
elliptic divisibility sequences. We observe that if~$\Aseq$ is a
divisibility sequence, then 
\[
  n\in\Scal(\Aseq)\quad\text{and}\quad d\mid D_n
    \quad\text{and}\quad \gcd(n,d)=1
  \implies nd\in\Scal(\Aseq).
\]
In particular, there is a sequence of arrows in~$\Arrow(\Aseq)$ 
satisfying
\[
  n \to \cdots \to nd.
\]
This is one way in which the index divisibility graph of divisibility
sequences exhibits a structure not found for arbitrary sequences.
It might be interesting to see if there are any other general statements
that one can make about the index divisibility graph of general
divisibility sequences.
\end{remark}

\begin{remark}
\label{remark:wardedsdef}
A classical alternative definition of an elliptic divisibility sequence
is a sequence of integers $\WardEDS=(W_n)_{n\ge1}$ 
defined by four initial terms~$(W_1,W_2,W_3,W_4)$ and satisfying
the recursion
\[
  W_{n+m}W_{n-m}W_r^2 = W_{n+r}W_{n-r}W_m^2 - W_{m+r}W_{m-r}W_n^2
  \text{ for all } n > m > r.
\]
One can show that if the sequence is normalized by $W_1=1$ and
\text{$W_2\mid W_4$}, then every term is an integer.
Ward~\cite{MR0027286,MR0023275} was the first to study the arithmetic
properties of these sequences. Subject to some non-degeneracy
conditions, he showed that there is an elliptic curve~$E/\QQ$ given by
a Weierstrass equation and a point~$P\in E(\QQ)$ such that
\[
  W_n = \psi_n(P),
\]
where~$\psi_n$ is the~$n$'th division polynomial
for~$E$~\cite[Exercise~3.7]{AEC}.  (See \cite{MR0027286}
or~\cite[Appendix~A]{MR2226354} for explicit formulas for~$E$ and~$P$
in terms of the initial terms of the~EDS.)  In particular, if
$\EDS=(D_n)_{n\ge1}$ is the EDS associated to~$(E,P)$, then~$D_n\mid W_n$ for
all~$n\ge1$. Thus
\begin{equation}
  \label{eqn:ndivDndivWn}
  n\mid D_n \implies n\mid W_n,
\end{equation}
so index divisibility for~$\EDS$ is a stronger condition than it is
for~$\WardEDS$. Further, one can show that
\[
  \ord_p(D_n) = \ord_p(W_n)
\]
for all primes~$p$ at which the Weierstrass equation has good
reduction, so the implication~\eqref{eqn:ndivDndivWn} can be reversed
if we ignore primes of bad reduction.  This shows that the
divisibility properties of~$\EDS$ and~$\WardEDS$ are closely
related. We have chosen in this paper to concentrate on the former.
\end{remark}

\begin{acknowledgement}
The research in this note was performed while the first author was a
long-term visiting researcher at Microsoft Research New England and
included a short visit by the second author. Both authors thank MSR
for its hospitality during their visits.
\end{acknowledgement}





\end{document}